\theoremstyle{plain}
\newtheorem{lem}{Lemma}[section]
\newtheorem{prop}[lem]{Proposition}
\newtheorem{thm}[lem]{Theorem}
\theoremstyle{definition}
\theoremstyle{remark}
\newtheorem*{rem}{Remark}
\newtheorem*{eg*}{Example}
\newcommand{\up}[1]{\text{$\uparrow^{#1}$}}
\newcommand{\down}[1]{\text{$\downarrow_{#1}$}}
\newcommand{\onto}{\twoheadrightarrow}
\newcommand{\Hom}{\operatorname{Hom}}
\newcommand{\rInd}{r\text{-}\operatorname{Ind}}
\newcommand{\Ind}{\operatorname{Ind}}
\newcommand{\Res}{\operatorname{Res}}
\newcommand{\rRes}{r\text{-}\operatorname{Res}}
\renewcommand{\t}{\mathbf{t}}
\newcommand{\sym}{\mathfrak{S}}
\begin{document}


\author{Kai Meng Tan}
\author{Wei Hao Teo}
\address{Department of Mathematics, National University of Singapore, Block S17, 10 Lower Kent Ridge Road, Singapore 119076.}
\email[K. M. Tan]{tankm@nus.edu.sg}
\email[W. H. Teo]{tweihao@dso.org.sg}

\title{Sign sequences and decomposition numbers}

\date{May 2011}

\thanks{2010 {\em Mathematics Subject Classification}. 17B37, 20C08, 20C30, 20G43.}
\thanks{Support by MOE Academic Research Fund R-146-000-135-112.}

\begin{abstract}
We obtain a closed formula for the $v$-decomposition numbers $d_{\lambda\mu}(v)$ arising from the canonical basis of the Fock space representation of $U_v(\widehat{\mathfrak{sl}}_e)$, where the partition $\lambda$ is obtained from $\mu$ by moving some nodes in its Young diagram, all of which having the same $e$-residue.  We also show that when these $v$-decomposition numbers are evaluated at $v=1$, we obtain the corresponding decomposition numbers for the Schur algebras and symmetric groups.
\end{abstract}

\maketitle

\section{Introduction}

Despite its rich structure, symmetric group algebras in positive characteristic are still not well understood.  Many problems --- some of them fundamental --- remain open.  Among them, one of the most famous is the complete determination of its decomposition numbers.  This has been shown to be equivalent to many other open problems, such as the complete determination of the dimensions of the irreducible modules of symmetric groups, or the complete determination of the  decomposition numbers of the (classical) Schur algebras.

Algebras related to the symmetric group algebras includes the above-mentioned Schur algebras, the $q$-Schur algebras and the Iwahori-Hecke algebras.  Also related is the Fock space representation $\mathcal{F}$ of $U_v(\widehat{\mathfrak{sl}}_e)$.  
The $v$-decomposition numbers $d_{\lambda\mu}(v)$ arising from the canonical basis of $\mathcal{F}$ is shown to give
the corresponding decomposition numbers of the $q$-Schur algebras at complex $e$-th root of unity when evaluated at $v=1$ (see \cite{VV}).  As the decomposition matrix of Schur algebras in characteristic $p$ can be obtained from that of the $q$-Schur algebras at complex $p$-th root of unity by postmultiplying the latter by an adjustment matrix (see \cite{J}), 
$d_{\lambda \mu}(v)|_{v=1}$ may be regarded as a first approximation to the decomposition number $d_{\lambda\mu}$ of the Schur algebra.

While there exist several algorithms to compute the $v$-decomposition numbers $d_{\lambda\mu}(v)$ --- which are parabolic Kazhdan-Lusztig polynomials and are of independent interest --- these are all inherently recursive in nature, and in practice can only be applied to `small' cases.  As such, it is desirable to have closed formulae.  While this may seem too ambitious in general, one can hope for such closed formulae when $\lambda$ and $\mu$ are related in some way.

In the same way, while we are currently far from a complete solution to the determination of decomposition numbers $d_{\lambda\mu}$ of symmetric groups and Schur algebras, one can look for partial solutions when $\lambda$ and $\mu$ are related in some way.  In this regard, Kleshchev \cite{Kleshchev} described $d_{\lambda\mu}$ when $\lambda$ is obtained from $\mu$ by moving one node in its Young diagram.  He introduced the sign sequence induced by the pair $(\lambda,\mu)$ and showed that the decomposition number $d_{\lambda\mu}$ equals the number of latticed subsets for this sign sequence.  His approach was to study the Schur algebra and the symmetric group algebra via the special linear group as an algebraic group, and the Schur functor.  An independent and more elementary combinatorial approach was used in \cite{CMT} to show that the corresponding $v$-decomposition numbers is a sum of monic monomials indexed by the latticed subsets.  

It is natural to ask if one can extend the results of \cite{Kleshchev} and \cite{CMT}, i.e.\ to determine $d_{\lambda\mu}$ and/or $d_{\lambda\mu}(v)$ when $\lambda$ is obtained from $\mu$ by moving $k$ nodes, where $k > 1$. We provide an affirmative answer in this paper for the case where the $k$ nodes moved all have the same residue.  In this case, our main result shows that the $v$-decomposition number $d_{\lambda\mu}(v)$ is a sum of monic monomials indexed by the {\em well-nested latticed paths} for the sign sequence induced by $(\lambda,\mu)$, and that the corresponding decomposition number $d_{\lambda\mu}$ can be obtained by evaluating the $v$-decomposition number $d_{\lambda\mu}(v)$ at $v=1$, and hence equals the number of well-nested latticed paths.  We believe the reader will appreciate the simple definition of (well-nested) latticed paths, as compared to the rather technical one of latticed subsets.

Our approach is combinatorial, similar to that of \cite{CMT}, though we keep the paper self-contained and independent of \cite{CMT}; in fact the results in \cite{CMT} are reproved here.  We obtain the above-mentioned $v$-decomposition numbers as well as the analogue of some relevant branching coefficients for the Fock space in this way.  Assuming Kleshchev's results \cite{Kleshchev} on the branching coefficients of the form $[\Res_{\mathcal{S}_{n-1}} L^{\lambda} : L^{\mu}]$ where $\mu$ is obtained from the partition $\lambda$ of $n$ by removing a normal node, we then show that these $v$-decomposition numbers give the corresponding decomposition numbers for the Schur algebras and the symmetric groups when evaluated at $v=1$.  We also obtain the relevant branching coefficients for the Schur algebras and the symmetric groups.

The paper is organised as follows:  we give a summary of the background theory in the next section.  In Section \ref{S:setup}, we set up the machineries with which we can state our main results.  In Section \ref{S:main}, we state and prove the main results, except for Theorem \ref{T:d} where we postpone its proof to the next section.  In Section \ref{S:proof}, we show that Theorem \ref{T:d} is equivalent to Theorem \ref{T:bij}, a combinatorial statement involving sign sequences and well-nested latticed paths.  We then prove Theorem \ref{T:bij}, thereby completing the proof of Theorem \ref{T:d}.

\section{Preliminaries}

In this section, we give a brief account of the background theory that we shall require.  From now on, we fix an integer $e \geq 2$.  Also, $\mathbb{Z}_e = \mathbb{Z}/e\mathbb{Z}$, and $\mathbb{N}_0$ is the set of non-negative integers.

\subsection{Partitions}
Let $n \in \mathbb{N}_0$.  A partition $\lambda$ of $n$ is a finite weakly decreasing sequence of positive integers summing to $n$.  Denote the set of partitions of $n$ by $\mathcal{P}_n$, and let $\mathcal{P} = \bigcup_{n \in \mathbb{N}_0} \mathcal{P}_n$.  If $\lambda  = (\lambda_1,\dotsc, \lambda_s)$, we write $l(\lambda) =s$.

The Young diagram $[\lambda]$ of $\lambda$ is the set
$$
[\lambda] = \{ (i,j) \mid 1 \leq i \leq l(\lambda), 1 \leq j \leq \lambda_i \}.$$
The elements of $[\lambda]$ are called nodes of $\lambda$.  If $r \in \mathbb{Z}_e$ and $(i,j)$ is a node of $\lambda$ such that $j-i \equiv r \pmod e$, then we say that $(i,j)$ has $e$-residue $r$, and that $(i,j)$ is an $r$-node of $\lambda$. A node $(i,j)$ of $\lambda$ is {\em removable} if $[\lambda] \setminus \{(i,j)\} = [\mu]$ for some partition $\mu$, in which case, we also call $(i,j)$ an {\em indent} node of $\mu$.  A node
$(i_1,j_1)$ is on the {\em right} of another node $(i_2,j_2)$ if $j_1 > j_2$.

Let $t \in \mathbb{Z}$ with $t \geq l(\lambda)$.  The set $$B_t(\lambda) = \{ \lambda_i + t -i \mid 1 \leq i \leq l(\lambda) \} \cup \{ t-i \mid l(\lambda) < i \leq t \}
$$
is the $\beta$-set of $\lambda$ of size $t$.


\subsection{The Jantzen order and a more refined pre-order} \label{S:preorder}

Let $\lambda, \tau \in \mathcal{P}$.  We write $\lambda \to \tau$ if there exists $a,b,i,t \in \mathbb{N}_0$ such that
\begin{itemize}
\item $a > b$,
\item $a \ne b-ie$, and $a, b-ie \in B_t(\lambda)$,
\item $b \ne a-ie$, and $b, a-ie \notin B_t(\lambda)$,
\item $B_t(\tau) = B_t(\lambda) \cup \{b,a-ie\} \setminus \{a,b-ie\}$.
\end{itemize}
The Jantzen order $\geq_J$ on $\mathcal{P}$ is defined
as follows:  $\lambda \geq_J \tau$ if and only if there exist partitions $\mu_0,\dotsc, \mu_s$ ($s \in \mathbb{N}_0$) such that $\mu_0 = \lambda$, $\mu_s = \tau$, and $\mu_{i-1} \to \mu_{i}$ for all $i = 1,\dotsc, s$.  As $\lambda\to \tau$ implies that $\lambda$ dominates $\tau$, the usual dominance order on $\mathcal{P}$ is a refinement of the Jantzen order. ($\lambda$ dominates or is equal to $\tau$ if and only if $\lambda,\tau \in \mathcal{P}_n$ for some $n \in \mathbb{N}_0$, $l(\lambda) \leq l(\tau)$, and $\sum_{i=0}^j \lambda_i \geq \sum_{i=0}^j \tau_i$ for all $j \leq l(\lambda)$.)

Let $r \in \mathbb{Z}_e$, and $t \in \mathbb{N}_0$ with $t \geq l(\lambda)$.  Define $\mathbf{s}_{\lambda,r,t} : \mathbb{N}_0 \to \mathbb{N}_0$ by
$$
\mathbf{s}_{\lambda,r,t}(i) =
\begin{cases}
|B_t(\lambda) \cap \{i\}|, &\text{if } i-t \not\equiv r,r-1 \pmod e; \\
|B_t(\lambda) \cap \{i,i+1\}|, &\text{if } i-t \equiv r-1 \pmod e; \\
|B_t(\lambda) \cap \{i,i-1\}|, &\text{if } i-t \equiv r \pmod e.
\end{cases}
$$
Thus, $\mathbf{s}_{\lambda,r,t}(r+t-1 + je) = \mathbf{s}_{\lambda,r,t}(r+t + je)$ for all admissible $j$.  Also, $\mathbf{s}_{\lambda,r,t}(i) = \mathbf{s}_{\lambda,r,t+1}(i+1)$ for all $i \in \mathbb{N}_0$.

We write $\lambda \sim_r \tau$ if and only if $\mathbf{s}_{\lambda,r,t} = \mathbf{s}_{\tau,r,t}$ for some $t \geq l(\lambda),l(\tau)$ (or equivalently, for {\em all} $t \geq l(\lambda),l(\tau)$).  Then $\sim_r$ is an equivalence relation on $\mathcal{P}$; we write $\t_r^{\lambda}$ for the equivalence class containing $\lambda$.  We define a total order on $\mathcal{P}/\!\!\sim_r$ as follows: $\t_r^{\lambda} > \t_r^{\tau}$ if and only if there exist $t$ and $j$ such that $\mathbf{s}_{\lambda,r,t}(i) = \mathbf{s}_{\tau,r,t}(i)$ for all $i> j$, and $\mathbf{s}_{\lambda,r,t}(j) > \mathbf{s}_{\tau,r,t}(j)$.

\begin{lem} \label{L:Jorder->preorder}
Let $r\in \mathbb{Z}_e$, and let $\lambda,\tau \in \mathcal{P}$.  If $\lambda \geq_J \tau$, then $\t_r^{\lambda} \geq \t_r^{\tau}$.
\end{lem}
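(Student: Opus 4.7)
The plan is to reduce to the single-step case $\lambda \to \tau$ and then chain, exploiting transitivity of the total order $\geq$ on $\mathcal{P}/\!\!\sim_r$. Suppose $\lambda \to \tau$ is witnessed by parameters $a, b, i, t$. Since $a, b - ie \in B_t(\lambda)$ while $b, a - ie \notin B_t(\lambda)$, these four positions are pairwise distinct, which forces $i \geq 1$; hence $a$ is strictly the largest element of the symmetric difference $B_t(\lambda) \triangle B_t(\tau) = \{a, b, a - ie, b - ie\}$. I will compare $\mathbf{s}_{\lambda, r, t}$ and $\mathbf{s}_{\tau, r, t}$ from the top down using this common $t$. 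For every index $c > a + 1$, the defining neighborhood of $\mathbf{s}(c)$---one of $\{c\}$, $\{c - 1, c\}$, or $\{c, c + 1\}$---is disjoint from the symmetric difference, so the two functions already agree; the remaining work concentrates at $c \in \{a, a + 1\}$.

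Here I would run a short case analysis on the residue of $a - t$ modulo $e$. When $a - t \not\equiv r - 1, r \pmod{e}$, position $a$ is unpaired and $\mathbf{s}_{\lambda, r, t}(a) - \mathbf{s}_{\tau, r, t}(a) = 1$. When $a - t \equiv r - 1 \pmod{e}$, position $a$ pairs with $a + 1 \notin B_t(\lambda) \triangle B_t(\tau)$, and one still obtains $\mathbf{s}_{\lambda, r, t}(c) - \mathbf{s}_{\tau, r, t}(c) = 1$ for both $c = a$ and $c = a + 1$. When $a - t \equiv r \pmod{e}$, position $a$ pairs with $a - 1$, and the bead status at $a - 1$ differs only if $a - 1 \in \{b, a - ie, b - ie\}$; ruling out $ie = 1$ (impossible since $i \geq 1$ and $e \geq 2$) and $b = a - 1 + ie > a$ (contradicting $a > b$) leaves only the possibility $b = a - 1$.

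This last subcase is the main obstacle. Here all four symmetric-difference positions lie on the pair of runners $r - 1$ and $r$, splitting into the coupled pairs $\{a - 1, a\}$ and $\{a - 1 - ie, a - ie\}$, each carrying exactly one bead in both $B_t(\lambda)$ and $B_t(\tau)$. Consequently $\mathbf{s}_{\lambda, r, t} = \mathbf{s}_{\tau, r, t}$ pointwise, so $\lambda \sim_r \tau$ and $\t_r^{\lambda} = \t_r^{\tau}$, matching the desired weak inequality. Combining all cases gives $\lambda \to \tau \Rightarrow \t_r^{\lambda} \geq \t_r^{\tau}$; chaining this along any Jantzen chain $\lambda = \mu_0 \to \cdots \to \mu_s = \tau$ and invoking transitivity then completes the proof.
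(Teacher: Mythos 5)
Your argument is correct and fills in the ``straightforward verification'' the paper leaves to the reader, via the same route: reduce to a single Jantzen step $\lambda\to\tau$, note that $a=\max\bigl(B_t(\lambda)\triangle B_t(\tau)\bigr)$, and compare $\mathbf{s}_{\lambda,r,t}$ with $\mathbf{s}_{\tau,r,t}$ from the top down. You are in fact slightly more careful than the paper's one-line proof, which asserts the strict inequality $\t_r^{\lambda}>\t_r^{\mu}$ for a single step: as your subcase $a-t\equiv r\pmod e$ with $b=a-1$ shows (concretely $e=2$, $r=1$, $(2)\to(1,1)$, $t=3$), such a step moves beads only between the merged runners $r-1$ and $r$ and gives $\t_r^{\lambda}=\t_r^{\tau}$ --- but the weak inequality is all the lemma needs, so the proof is complete.
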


\begin{proof}
  It is straightforward to verify that $\lambda \to \mu$ implies $\t_r^\lambda > \t_r^\mu$.
\end{proof}

\subsection{$q$-Schur algebras}

Let $\mathbb{F}$ be an algebraically closed field of characteristic $l$, where $l$ is either zero, or $e$ (if $e$ is prime), or is coprime to $e$.  Let $q \in \mathbb{F}^*$ be such that $e$ is the least integer such that $1+q+\dotsb+q^{e-1}= 0$ (i.e.\ $q$ is a primitive $e$-th root of unity if $l=0$ or $l$ is coprime to $e$, and $q = 1$ if $l=e$).  The $q$-Schur algebra $\mathcal{S}_n = \mathcal{S}_{\mathbb{F},q}(n,n)$ over $\mathbb{F}$ has a distinguished class $\{ \Delta^{\mu} \mid \mu \in \mathcal{P}_n \}$ of modules called Weyl modules.  Each $\Delta^{\mu}$ has a simple head $L^{\mu}$, and the set $\{ L^{\mu} \mid \mu \in \mathcal{P}_n \}$ is a complete set of pairwise non-isomorphic simple modules of $\mathcal{S}_n$.
(Note that $\Delta^{\mu}$ and $L^{\mu}$ are denoted as $W^{\mu'}$ and $F^{\mu'}$ respectively in \cite{DJ}.)  The projective cover $P^\mu$ of $L^\mu$ (or of $\Delta^\mu$) has a filtration in which each factor is isomorphic to a Weyl module; the multiplicity of $\Delta^{\lambda}$ in such a filtration is well-defined, and is equal to the multiplicity of $L^{\mu}$ as a composition factor of $\Delta^\lambda$.  We denote this multiplicity as $d_{\lambda\mu}^l$, which is a decomposition number of $\mathcal{S}_n$.  The Jantzen sum formula (see, for example, \cite{Mathasbook}) provides the following consequence:

\begin{lem} \label{L:decompJorder}
If $d^l_{\lambda\mu} \ne 0$, then $\mu \geq_J \lambda$.
\end{lem}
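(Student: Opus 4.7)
The plan is to derive the result from the Jantzen sum formula for Weyl modules of the $q$-Schur algebra $\mathcal{S}_n$, combined with downward induction on $\lambda$ in the dominance order. For the base case, $\lambda$ is dominance-maximal, and the standard dominance constraint $d^l_{\lambda\mu} \ne 0 \Rightarrow \mu$ dominates $\lambda$ (a consequence of the fact that $\Delta^\lambda$ has simple head $L^\lambda$) forces $\mu = \lambda$, giving $\mu \geq_J \lambda$ trivially. For the inductive step, I would assume the statement for all partitions strictly dominating $\lambda$, and suppose $d^l_{\lambda\mu} \ne 0$ with $\mu \ne \lambda$.

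The Jantzen filtration $\Delta^\lambda = \Delta^\lambda_0 \supseteq \Delta^\lambda_1 \supseteq \cdots$ has $\Delta^\lambda_0 / \Delta^\lambda_1 \cong L^\lambda$, so $L^\mu$ with $\mu \ne \lambda$ must occur as a composition factor of $\sum_{i \geq 1}[\Delta^\lambda_i]$. The next step is to invoke the Jantzen sum formula, which rewrites this sum as $\sum_\tau c_{\lambda\tau}[\Delta^\tau]$, where the nonzero contributions come from partitions $\tau$ obtained from $\lambda$ by precisely the $\beta$-set swap defining the relation $\to$ in Section \ref{S:preorder}. After the sign cancellations built into the formula, each such $\tau$ satisfies $\tau \to \lambda$, hence $\tau \geq_J \lambda$ and $\tau$ strictly dominates $\lambda$. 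Therefore $L^\mu$ is a composition factor of $\Delta^\tau$ for at least one such $\tau$, i.e.\ $d^l_{\tau\mu} \ne 0$.

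Applying the inductive hypothesis to $\tau$ (valid since $\tau$ strictly dominates $\lambda$) then gives $\mu \geq_J \tau$, and transitivity of $\geq_J$ combined with $\tau \geq_J \lambda$ yields $\mu \geq_J \lambda$, closing the induction. The main obstacle I anticipate is the bookkeeping required to match the $\beta$-set operations appearing in the classical statement of the Jantzen sum formula with the specific form of the $\to$ relation used in the paper, together with confirming the \emph{direction} of the move: one must verify that, after all sign cancellations, only $\tau$'s with $\tau \to \lambda$ survive, so that no partitions lying below $\lambda$ in the Jantzen order can appear and break the transitivity step.
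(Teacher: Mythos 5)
Your proposal is correct and follows the same approach the paper implicitly relies on: the paper gives no argument beyond a citation of the Jantzen sum formula in Mathas's book, and your proof by downward induction in the dominance order is the standard way to extract the lemma from that formula. The caveat you flag about sign cancellations is not actually an obstacle: since the left-hand side $\sum_{i\geq 1}[\Delta^\lambda_i]$ is effective and every $\tau$ appearing on the right-hand side of the sum formula already satisfies $\tau \to \lambda$ (a $\beta$-set swap of the type defining the relation), the positivity of $[L^\mu]$ on the left forces $d^l_{\tau\mu}\ne 0$ for at least one such $\tau$ regardless of the signs of the coefficients $c_{\lambda\tau}$, and the induction closes.
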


The decomposition numbers in characteristic $l$ and those in characteristic $0$ are related by an adjustment matrix $A_l$: let $D_l = (d_{\lambda\mu}^l)_{\lambda,\mu \in \mathcal{P}_n}$, then $D_l = D_0 A_l$.  Furthermore, the matrices $A_l$, $D_l$ and $D_0$ are all lower unitriangular with nonnegative entries when the partitions indexing its rows and columns are ordered by a total order extending the dominance order on $\mathcal{P}_n$.  As a consequence, we have

\begin{lem} \label{L:geq}
$d_{\lambda\mu}^l \geq d_{\lambda\mu}^0$.
\end{lem}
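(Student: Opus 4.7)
The plan is to read off the inequality directly from the matrix factorisation $D_l = D_0 A_l$. Writing this out entrywise, for any $\lambda, \mu \in \mathcal{P}_n$ we have
\[
  d^l_{\lambda\mu} \ = \ \sum_{\nu \in \mathcal{P}_n} d^0_{\lambda\nu}\, (A_l)_{\nu\mu}.
\]
The hypothesis says that with respect to some total order on $\mathcal{P}_n$ refining dominance, each of $D_0$, $D_l$, $A_l$ is lower unitriangular with nonnegative entries. In particular $(A_l)_{\mu\mu} = 1$, so the term $\nu = \mu$ on the right-hand side contributes exactly $d^0_{\lambda\mu}$.

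It therefore suffices to check that every remaining term $d^0_{\lambda\nu}(A_l)_{\nu\mu}$ with $\nu \neq \mu$ is nonnegative. But nonnegativity is immediate from the assumption that both $D_0$ and $A_l$ have nonnegative entries: $d^0_{\lambda\nu} \geq 0$ and $(A_l)_{\nu\mu} \geq 0$ for all $\nu$. Hence
\[
  d^l_{\lambda\mu} \ = \ d^0_{\lambda\mu} + \sum_{\nu \neq \mu} d^0_{\lambda\nu}(A_l)_{\nu\mu} \ \geq \ d^0_{\lambda\mu},
\]
as required. (By unitriangularity only the $\nu$ with $\lambda \geq \nu \geq \mu$ in the chosen total order actually contribute, but this refinement is not even needed for the inequality.)

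There is no real obstacle here: the whole content of the lemma is to observe that a product of nonnegative unitriangular matrices has each entry at least as large as the corresponding entry of its right factor. I would keep the proof to a single short paragraph, citing the matrix identity $D_l = D_0 A_l$ and the nonnegativity/unitriangularity statement recalled just above the lemma.
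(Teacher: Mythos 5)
Your proof is correct and is exactly the argument the paper intends (the paper presents the lemma as an immediate consequence of the factorisation $D_l = D_0 A_l$ together with unitriangularity and nonnegativity, without writing out the entrywise computation). Your expansion of the $\nu=\mu$ term using $(A_l)_{\mu\mu}=1$ and nonnegativity of the remaining terms is precisely what makes the ``consequence'' go through.
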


The Schur functor maps the Weyl module $\Delta^\mu$ to the Specht module $S^{\mu}$ of the Iwahori-Hecke algebra $\mathcal{H}_n = \mathcal{H}_{\mathbb{F},q}(n)$.  It also maps the simple module $L^\mu$ of $\mathcal{S}_n$ to the simple module $D^{\mu}$ of $\mathcal{H}_n$ if $\mu$ is $e$-regular, and to zero otherwise.  As such, the decomposition matrix of $\mathcal{H}_n$ is a submatrix of $\mathcal{S}_n$.

We note that if $q=1$ (or equivalently, $l=e$), then $\mathcal{S}_n$ is the classical Schur algebra and $\mathcal{H}_n$ is the symmetric group algebra $\mathbb{F}\sym_n$.

\subsection{Restriction and induction}

The $q$-Schur algebra $\mathcal{S}_n$ is isomorphic to a subalgebra of $\mathcal{S}_{n+1}$ so that the restriction functor $\Res_{\mathcal{S}_n}$ and the induction functor $\Ind^{\mathcal{S}_{n+1}}$ make sense.  In fact
$$
\Res_{\mathcal{S}_n} = \bigoplus_{r \in \mathbb{Z}_e} \rRes_{\mathcal{S}_n}, \qquad \Ind^{\mathcal{S}_{n+1}} = \bigoplus_{r \in \mathbb{Z}_e} \rInd^{\mathcal{S}_{n+1}}.
$$
For each $r \in \mathbb{Z}_e$, $\rRes_{\mathcal{S}_n}$ and $\rInd^{\mathcal{S}_{n+1}}$ are exact, preserve direct sums and are adjoint of each other.  Their effects on Weyl modules can be easily described in the Grothendieck group:

\begin{thm} \label{T:Weylbranch}
Let $r \in \mathbb{Z}_e$, $\lambda \in \mathcal{P}_n$ and $\tau \in \mathcal{P}_{n+1}$.  Then
\begin{align*}
[\rRes_{\mathcal{S}_n} \Delta^{\tau}] &= \sum_{\sigma} [\Delta^{\sigma}], \\
[\rInd^{\mathcal{S}_{n+1}} \Delta^{\lambda}] &= \sum_{\mu} [\Delta^{\mu}];
\end{align*}
where the first sum runs over all $\sigma \in \mathcal{P}_{n}$ that can be obtained from $\tau$ by removing a removable $r$-node while the second sum runs over all $\mu \in \mathcal{P}_{n+1}$ that can be obtained from $\lambda$ by adding an indent $r$-node.
\end{thm}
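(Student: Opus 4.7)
The plan is to derive both branching rules from the quantum Pieri rule combined with the residue decomposition that is built into the very definitions of $\rRes_{\mathcal{S}_n}$ and $\rInd^{\mathcal{S}_{n+1}}$.

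First I would establish the ungraded branching identities
\[
[\Res_{\mathcal{S}_n} \Delta^{\tau}] = \sum_{\sigma} [\Delta^{\sigma}], \qquad [\Ind^{\mathcal{S}_{n+1}} \Delta^{\lambda}] = \sum_{\mu} [\Delta^{\mu}],
\]
where $\sigma$ (resp.\ $\mu$) ranges over all partitions obtained from $\tau$ (resp.\ $\lambda$) by removing (resp.\ adding) any removable (resp.\ indent) node, regardless of residue. The standard way is to pass to an integral $\mathbb{Z}[v,v^{-1}]$-form of the $q$-Schur algebra and its Weyl modules; in the generic specialisation (characteristic $0$ and $v$ an indeterminate) the Weyl modules are simple and this is the classical Pieri rule for polynomial representations of the quantum $\mathfrak{gl}_n$. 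Because Weyl modules in the $v$-form admit a $\Delta$-filtration whose factors are preserved under base change (via Donkin's theory of good filtrations, or equivalently via the Dipper--James tilting setup), the identity survives specialisation to any $q \in \mathbb{F}^*$ satisfying our hypotheses.

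Next I would split the sum according to residues. The decompositions $\Res_{\mathcal{S}_n} = \bigoplus_{r} \rRes_{\mathcal{S}_n}$ and $\Ind^{\mathcal{S}_{n+1}} = \bigoplus_{r} \rInd^{\mathcal{S}_{n+1}}$ arise from the action of a central $q$-Jucys--Murphy-type element of $\mathcal{S}_{n+1}$ (one that measures the $q$-content of the added or removed node) whose spectrum on $\Delta^{\tau}$ is indexed by $\mathbb{Z}_e$. This central element acts on the subquotient $\Delta^{\sigma}$ of $\Res \Delta^{\tau}$ by a scalar that depends only on the residue of the node $[\tau] \setminus [\sigma]$: when $q$ is a primitive $e$-th root of unity the $q$-content $[j-i]_q$ depends only on $(j-i) \bmod e$, and when $q=1$ with $l=e$ one uses the content itself modulo $e$. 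Each $[\Delta^{\sigma}]$ thus contributes to exactly one summand $[\rRes_{\mathcal{S}_n} \Delta^{\tau}]$, namely the one indexed by that residue, which gives the first identity. The second identity follows either by repeating the argument for $\Ind^{\mathcal{S}_{n+1}}$ with the parallel central element of $\mathcal{S}_{n+1}$, or by adjointness of $\rRes_{\mathcal{S}_n}$ and $\rInd^{\mathcal{S}_{n+1}}$ together with the observation that the set of $\mu$ obtained from $\lambda$ by adding an indent $r$-node is precisely the set of $\mu$ from which $\lambda$ is obtained by removing a removable $r$-node.

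The main obstacle is the ungraded Pieri step: one has to know that the relevant integral forms behave well under specialisation, so that the Weyl filtration and its multiplicities are preserved in all the characteristic and parameter regimes permitted by the hypothesis on $(l,q,e)$. Once this is in hand the residue split is essentially formal. In the body of the paper one can almost certainly quote this theorem from standard references (e.g.\ Dipper--James or Mathas' book) rather than reprove it from scratch.
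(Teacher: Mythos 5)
The paper states Theorem~\ref{T:Weylbranch} without proof, as a standard fact about $q$-Schur algebras (see, e.g., \cite{Mathasbook} or \cite{DJ}); your closing remark that one should simply cite it is precisely what the authors do, so there is no proof in the paper to compare against. Your reconstruction of the standard argument is essentially sound: generic simplicity of Weyl modules gives the ungraded Pieri rule, and the integral $v$-form with base change propagates the $\Delta$-filtration multiplicities to all admissible $(l,q,e)$. Two details are worth tightening if you were to write it out in full. The residue refinement for $q$-Schur algebras is usually expressed via the block decomposition by $e$-cores (the Nakayama rule) rather than a literal central Jucys--Murphy element; the two viewpoints agree, but the block-theoretic one is what the references give and spares you from identifying and diagonalising a central element of $\mathcal{S}_n$. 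And deducing the $\rInd^{\mathcal{S}_{n+1}}$ identity from the $\rRes_{\mathcal{S}_n}$ one by adjointness alone is not immediate, since Weyl modules are neither projective nor injective; cleaner is to run the parallel Pieri argument for induction, or to use the contravariant duality exchanging $\Delta$ and its costandard partner together with the compatibility of $\rRes$ and $\rInd$ with that duality. Neither point is a fatal gap, but both would need attention in a self-contained writeup.
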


\subsection{The Fock space representation}

The algebra $U_{v}(\widehat{\mathfrak{sl}}_{e})$ is the
associative algebra over $\mathbb{C}(v)$ with generators $e_{r}$,
$f_{r}$, $k_{r}$, $k_{r}^{-1}$ $(0\leq r\leq e-1)$, $d$, $d^{-1}$
subject to some relations (see, for example, \cite[\S4]{L}) which we shall not require.
An important $U_{v}(\widehat{\mathfrak{sl}}_{e})$-module is the Fock space representation
$\mathcal{F}$, which as a $\mathbb{C}(v)$-vector space, has $\mathcal{P}$ as a basis.  For our purposes, an explicit description of the action of $f_r$ on $\mathcal{F}$ will suffice.

Let $\lambda$ be a partition and suppose that the partition $\mu$ is obtained by adding an indent $r$-node $\mathfrak{n}$ to $\lambda$.  Let $N_>(\lambda,\mu)$ be the number of indent $r$-nodes on the right of $\mathfrak{n}$ minus the number of removable $r$-nodes on the right of $\mathfrak{n}$.  We have
\begin{equation} \label{E:f_r}
f_r(\lambda) = \sum_{\mu} v^{N_>(\lambda,\mu)} \mu
\end{equation}
where the sum runs over all partitions $\mu$ that can be obtained from $\lambda$ by adding an indent $r$-node.

In \cite{LT1}, Leclerc and Thibon introduced an involution $x \mapsto
\overline{x}$ on $\mathcal{F}$, having the following properties (among
others):
$$\overline{a(v)x } = a(v^{-1})\overline{x}, \quad
\overline{f_r(x)} = f_r(\overline{x}) \qquad (a(v) \in \mathbb{C}(v),\ x
\in \mathcal{F}).$$

There is a distinguished basis $\{G(\lambda) \mid \lambda \in \mathcal{P}
\}$ 
of $\mathcal{F}$ called the canonical basis, which can be characterised (\cite[Theorem 4.1]{LT1}) as follows:
\begin{itemize}
\item  $G(\lambda )- \lambda \in vL$, where $L$ is the
       $\mathbb{Z}[v]$-lattice in $\mathcal{F}$ generated by $\mathcal{P}$.
\item  $\overline{G(\lambda )}=G(\lambda )$.
\end{itemize}

Let $\left< - , - \right>$ be the inner product on $\mathcal{F}$ with respect to which its basis $\mathcal{P}$ is orthonormal.  Define $d_{\lambda\mu}(v)$ by
$$
d_{\lambda\mu}(v) = \left< G(\mu),\lambda \right>.
$$

We collate together some well-known properties of $d_{\lambda\mu}(v)$.

\begin{thm} \label{T:vdecomp} \hfill
\begin{enumerate}
\item $d_{\mu\mu}(v) = 1$;
\item $d_{\lambda\mu}(v) \in v\mathbb{N}_0[v]$ if $\lambda \ne \mu$;
\item $d_{\lambda\mu}(1) = d^{0}_{\lambda\mu}$;
\item $d_{\lambda\mu}(v) \ne 0$ only if $\lambda \leq_J \mu$;
\item $d_{(\lambda_1,\dotsc, \lambda_r),(\mu_1,\dotsc, \mu_s)}(v) = d_{(\lambda_2,\dotsc, \lambda_r),(\mu_2,\dotsc, \mu_s)}(v)$ if $\lambda_1 = \mu_1$.
\end{enumerate}
\end{thm}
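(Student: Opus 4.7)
The plan is to handle the five parts using the characterisation of the canonical basis together with the earlier results, with (v) being the main combinatorial work.

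For (i), (ii), (iii), (iv): write $G(\mu) = \mu + \sum_{\lambda \neq \mu} d_{\lambda\mu}(v)\,\lambda$. The defining condition $G(\mu) - \mu \in vL$ forces $d_{\mu\mu}(v) = 1$ and $d_{\lambda\mu}(v) \in v\mathbb{Z}[v]$ for $\lambda \neq \mu$; this gives (i) and the weak form of (ii). The non-negativity in (ii), i.e.\ $d_{\lambda\mu}(v) \in v\mathbb{N}_0[v]$, is the deep input and would be invoked from the positivity of the relevant parabolic Kazhdan--Lusztig polynomials (equivalently, from the Varagnolo--Vasserot categorification, under which $d_{\lambda\mu}(v)$ records graded composition multiplicities and so lies in $\mathbb{N}_0[v]$). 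For (iii), I would cite the theorem of Varagnolo--Vasserot \cite{VV} recalled in the introduction. Part (iv) then follows in one line: if $d_{\lambda\mu}(v) \neq 0$ with $\lambda \neq \mu$, then (ii) yields $d_{\lambda\mu}(1) > 0$ (all coefficients are non-negative and not all zero), so by (iii) we have $d^{0}_{\lambda\mu} > 0$, and Lemma \ref{L:decompJorder} delivers $\mu \geq_J \lambda$ (the case $\lambda = \mu$ being trivial).

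For (v), my approach would be to use uniqueness of the canonical basis. Put $\nu = (\mu_2, \dots, \mu_s)$ and let $\Phi$ be the $\mathbb{C}(v)$-linear map on the span of partitions $\sigma$ with $\sigma_1 \leq \mu_1$ that sends such a $\sigma$ to $(\mu_1, \sigma_1, \sigma_2, \dots)$. Then
\[
\Phi(G(\nu)) = \mu + \sum_{\sigma \neq \nu} d_{\sigma\nu}(v)\,\Phi(\sigma)
\]
is upper unitriangular (with respect to dominance, restricted to partitions of first part $\mu_1$) with non-leading coefficients in $v\mathbb{Z}[v]$. The crucial step is to verify that $\Phi$ commutes with the bar involution on its domain; once this is in hand, $\Phi(G(\nu))$ satisfies the two defining properties of the ``portion of $G(\mu)$ supported on partitions with first part $\mu_1$'', and uniqueness of $G(\mu)$ forces $d_{\Phi(\sigma),\,\mu}(v) = d_{\sigma,\,\nu}(v)$ for every such $\sigma$, which is exactly (v).

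The main obstacle will be the bar-intertwining in (v): the bar involution on $\mathcal{F}$ is determined by its interaction with the operators $f_r$, and checking that $\Phi$ commutes with it reduces, via \eqref{E:f_r}, to a combinatorial verification that the statistics $N_>(\lambda,\mu)$ are unchanged when a common first part of size $\mu_1$ is prefixed to both partitions. This amounts to a careful bookkeeping of indent and removable $r$-nodes lying strictly to the right of the node being added, and is where the substance of the proof sits.
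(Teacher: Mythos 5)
For parts (i)--(iv) your approach is essentially the paper's, which cites \cite[7.2]{L} for (i), Varagnolo--Vasserot \cite{VV} for (ii) and (iii), and deduces (iv) from (ii), (iii) and Lemma~\ref{L:decompJorder}. One correction to your derivation of (i): the condition $G(\mu)-\mu\in vL$ by itself only forces $d_{\mu\mu}(v)\in 1+v\mathbb{Z}[v]$, not $d_{\mu\mu}(v)=1$; to pin down the leading coefficient you must also invoke $\overline{G(\mu)}=G(\mu)$ together with the unitriangularity of the bar involution with respect to dominance (this is the content of \cite[7.2]{L}, so the gap is in your re-derivation, not in the result).

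For part (v) the paper simply cites the row-removal theorem \cite[Theorem~1(1)]{removal}, while you propose an independent proof via the prepending map $\Phi$ and uniqueness of the canonical basis. There is a genuine gap at precisely the step you flag as crucial: the claimed reduction of the bar-intertwining of $\Phi$ to the invariance of the statistics $N_>$ does not go through, because $\Phi$ does not commute with $f_r$ even after the forced residue shift $r\mapsto r-1$. For instance, with $e=2$, $\mu_1=2$ and $\sigma=(1)$ one computes from \eqref{E:f_r} that $f_1(1) = (2) + v\,(1,1)$, whereas $f_0(\Phi(1)) = f_0\bigl((2,1)\bigr) = (3,1) + v\,(2,2) + v^2\,(2,1,1) = (3,1) + v\,\Phi\bigl(f_1(1)\bigr)$. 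So the exponents $N_>$ are shifted by a global power of $v$ (coming from the addable node at the end of the prepended first row, which lies to the right of everything else), and there is an additional term $(3,1)$, produced by adding a node to the prepended row, which lies outside the image of $\Phi$. Since $\Phi$ fails to intertwine the $f_r$'s, and the $f_r$'s are exactly what determine the bar involution via $\overline{f_r(x)}=f_r(\overline{x})$, the bar-intertwining of $\Phi$ cannot be obtained by the bookkeeping you describe; filling this hole would essentially amount to reproving the row-removal theorem of \cite{removal}, which is considerably more delicate than a check on $N_>$.
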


\begin{proof}
Part (i) follows from \cite[7.2]{L}, parts (ii, iii) are proved by Varagnolo and Vasserot \cite{VV}, part (iv) follows from parts (ii, iii) and Lemma \ref{L:decompJorder}, and part (v) is a special case of \cite[Theorem 1(1)]{removal}.
\end{proof}

Theorem \ref{T:vdecomp}(iii) in particular establishes the link between the $v$-decomposition numbers of the Fock space and the decomposition numbers of $q$-Schur algebras in characteristic zero. The canonical basis vector $G(\mu)$ of $\mathcal{F}$ thus corresponds to the projective cover $P^{\mu}$ of $q$-Schur algebras, while the standard basis element $\lambda$ of $\mathcal{F}$ corresponds to the Weyl module $\Delta^{\lambda}$.  Under this correspondence, the action of $f_r$ on $\mathcal{F}$ corresponds to that of $\rInd$ on the $q$-Schur algebras.

\section{Combinatorial setup} \label{S:setup}

In this section, we introduce the notations and set up the combinatorial machineries necessary for this paper.

\subsection{Notations}

Given a subset $A$ of a totally ordered set $(\mathcal{U},\geq)$, and $c,d \in \mathcal{U}$ with $c < d$, write
\begin{align*}
A^{<d} &= \{ a \in A \mid a < d \}; \\
A^{>c} &= \{ a \in A \mid a > c \}; \\
A_{c}^{d} &= A^{>c} \cap A^{<d}.
\end{align*}
We also define $A^{\leq d}$ and $A^{\geq c}$ analogously.

\subsection{Pairings}

Given two disjoint finite subsets $A$ and $B$ of a totally ordered set $(\mathcal{U}, \geq)$, we define its associated path, denoted as $\Gamma(A \text{$\to$} B)$, as follows:  let $A \cup B = \{s_1,s_2,\dotsc, s_k \}$ where $s_1< s_2< \dotsb < s_k$.  For each $i$, represent $s_i$ by the following line segment:
$$
\begin{cases}
\diagup, &\text{if }s_i \in A; \\
\diagdown, &\text{if }s_i \in B.
\end{cases}
$$
Connect these $k$ line segments $s_1,s_2,\dotsc, s_k$ in order, to obtain the path $\Gamma(A \text{$\to$} B)$.

We define a {\em pairing of $A$ to $B$} using $\Gamma(A \text{$\to$} B)$.  An element $x \in A$ is paired with $\pi_{A \to B}(x) \in B$ that is represented by the next $\diagdown$ on the right of and at the `same level' as the $\diagup$ representing $x$, if such a $\diagdown$ exists.  If such a $\diagdown$ does not exist, then $x$ is said to be unpaired.

Denote the subsets consisting of paired and unpaired elements of $A$ by $P_{A \to B}(A)$ and $U_{A \to B}(A)$, and the subsets consisting of paired and unpaired elements of $B$ by $P_{A \to B}(B)$ and $U_{A \to B}(B)$.  Clearly, $\pi_{A\to B} : P_{A\to B}(A) \to P_{A\to B}(B)$ is a bijection.

We write $A \onto B$ if $P_{A\to B}(B) = B$ (equivalently, $U_{A\to B}(B) = \emptyset$).  Note that $A \onto B$ if and only if $\Gamma(A \text{$\to$} B)$ is a Dyck path, i.e.\ a path that has the following property: if its left end is placed at the origin of the $(x,y)$-plane, then the path is contained in the first quadrant $\{ (x,y) \mid x,y \geq 0 \}$.

\begin{eg*}
Let $A = \{ 2,3,10\}$ and $B = \{ 5, 6,8 \}$, with the natural order on $\mathbb{Z}$.  Then $\Gamma(A \text{$\to$} B)$ is
\medskip
\begin{center}
\begin{tikzpicture}[scale=0.8]
\draw [semithick] (0,0) -- (1,1) -- (2,2) -- (3,1) -- (4,0) -- (5,-1) -- (6,0);
\fill (0,0) circle (2pt);
\fill (1,1) circle (2pt);
\fill (2,2) circle (2pt);
\fill (3,1) circle (2pt);
\fill (4,0) circle (2pt);
\fill (5,-1) circle (2pt);
\fill (6,0) circle (2pt);
\draw (0.35,0.75) node {$2$};
\draw (1.35,1.75) node {$3$};
\draw (2.6,1.75) node {$5$};
\draw (3.6,0.75) node {$6$};
\draw (4.35,-0.7) node {$8$};
\draw (5.7,-0.75) node {$10$};
\end{tikzpicture}
\end{center}
We have $\pi_{A \to B} (2) = 6$, $\pi_{A \to B} (3) = 5$, $U_{A\to B}(A) = \{10\}$ and $U_{A\to B}(B) = \{8\}$.  Also, $A \not\onto B$.
\end{eg*}

If $P_{A\to B}(B) = B$ and $P_{A\to B}(A) = A$, we write $A \leftrightarrow B$.  In this case, $\pi_{A\to B}$ is a bijection from $A$ to $B$.

When $A$ and $B$ are finite subsets of the totally ordered set $(\mathcal{U},\geq)$ but are not necessarily disjoint, let $A^* = A \setminus B$ and $B^* = B \setminus A$.  We extend the pairing $\pi_{A^* \to B^*}$ to $\pi_{A \to B}$ by making every element of $A \cap B$ paired with itself.  The subsets $P_{A\to B}(A)$, $U_{A\to B}(A)$, $P_{A\to B}(B)$ and $U_{A\to B}(B)$ are defined according to the pairing $\pi_{A\to B}$ (thus, for example, $P_{A\to B}(A) = P_{A^* \to B^*}(A^*) \cup (A\cap B)$).  Note then that $A \onto B$ and $A\leftrightarrow B$ if and only if $A^* \onto B^*$ and $A^* \leftrightarrow B^*$ respectively.  Furthermore, $A \onto B$ if and only if $|A^{\leq n}| \geq |B^{\leq n}|$ for all $n \in \mathcal{U}$.

\begin{lem} \label{L:order}
Let $X,Y$ be non-empty disjoint finite subsets of a totally ordered set $(\mathcal{U}, \geq)$.  The relation $\preceq$ on $\{ (A,B) \mid A \subseteq X,\ B \subseteq Y,\ A \onto B \}$, defined by $(A,B) \preceq (C,D)$ if and only if $|A| - |B| = |C| - |D|$ and $(B \cup C) \onto (A \cup D)$, is a partial order.
\end{lem}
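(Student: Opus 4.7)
The plan is to verify reflexivity, antisymmetry, and transitivity separately, relying throughout on the cardinality characterization noted immediately before the statement: $S \onto T$ iff $|S^{\leq n}| \geq |T^{\leq n}|$ for every $n \in \mathcal{U}$. The disjointness $X \cap Y = \emptyset$ is what makes this work, because it lets me split $|(B \cup C)^{\leq n}| = |B^{\leq n}| + |C^{\leq n}|$ (and similarly for $A \cup D$), reducing each instance of $\onto$ to a single scalar inequality indexed by $n$.

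Reflexivity is immediate, since $(A,B) \preceq (A,B)$ reduces to $A \cup B \onto A \cup B$, which holds for any finite set (every element pairs with itself). For transitivity, given $(A,B) \preceq (C,D) \preceq (E,F)$, the difference condition $|A| - |B| = |E| - |F|$ chains trivially, and rewriting the two hypotheses as
\[
|B^{\leq n}| + |C^{\leq n}| \geq |A^{\leq n}| + |D^{\leq n}|, \qquad |D^{\leq n}| + |E^{\leq n}| \geq |C^{\leq n}| + |F^{\leq n}|
\]
for every $n$, I would add them and cancel the common $|C^{\leq n}| + |D^{\leq n}|$ to extract $|B^{\leq n}| + |E^{\leq n}| \geq |A^{\leq n}| + |F^{\leq n}|$, i.e.\ $(B \cup E) \onto (A \cup F)$.

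For antisymmetry, if $(A,B) \preceq (C,D)$ and $(C,D) \preceq (A,B)$, then the two counting inequalities above (with $(E,F) = (A,B)$) force the equality $|A^{\leq n}| + |D^{\leq n}| = |B^{\leq n}| + |C^{\leq n}|$ for every $n \in \mathcal{U}$. Taking successive differences in $n$, the indicator functions satisfy $\chi_A(m) + \chi_D(m) = \chi_B(m) + \chi_C(m)$ at every $m \in \mathcal{U}$. Restricting this identity to $m \in X$ kills $\chi_B$ and $\chi_D$ (since $B, D \subseteq Y$ and $X \cap Y = \emptyset$), so $\chi_A = \chi_C$ on $X$, whence $A = C$; restricting symmetrically to $m \in Y$ gives $B = D$.

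There is no serious obstacle. The only step requiring any care is acknowledging that $B \cup C$ and $A \cup D$ are not a priori disjoint as subsets of $\mathcal{U}$ (they may share elements inside $X$ or inside $Y$), so the cardinality form of $\onto$ must be invoked via the non-disjoint pairing convention set up just before the statement; once that is done, the argument is purely book-keeping with counting functions.
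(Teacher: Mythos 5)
Your proof is correct and follows essentially the same route as the paper: reflexivity is immediate, transitivity uses exactly the same splitting of $|(B\cup C)^{\leq n}|$ and $|(A\cup D)^{\leq n}|$ via disjointness of $X$ and $Y$ followed by adding and cancelling, and antisymmetry is the same cardinality argument. One minor wording caveat: ``taking successive differences in $n$'' should be read as comparing the jump sets of the step functions $n\mapsto |S^{\leq n}|$ for finite $S$, since $\mathcal{U}$ is an arbitrary totally ordered set; the paper avoids this phrasing by concluding directly that $A\cup D=B\cup C$ and then intersecting with $X$ and $Y$ to recover $A=C$, $B=D$.
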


\begin{proof}
$\preceq$ is clearly reflexive.

Let $(A,B) \preceq (C,D)$ and $(C,D) \preceq (A,B)$.  To get $(A,B) = (C,D)$, it suffices to show that $A \cup D = B \cup C$, since $A, C \subseteq X$, $B, D \subseteq Y$, and $X \cap Y = \emptyset$.  But $(A \cup D) \onto (B \cup C) \onto (A \cup D)$ gives $|(A\cup D)^{\leq n}| \geq |(B \cup C)^{\leq n}| \geq |(A \cup D)^{\leq n}|$ for all $n \in \mathcal{U}$, so that indeed $A \cup D = B \cup C$.  Thus $\preceq$ is anti-symmetric.

Now let $(A,B) \preceq (C,D)$ and $(C,D) \preceq (E,F)$.  Then $|A|-|B| = |C|-|D| = |E|-|F|$; and
\begin{align*}
|(A \cup D)^{\leq n}| &\leq |(B \cup C)^{\leq n}|, \\
|(C \cup F)^{\leq n}| &\leq |(D \cup E)^{\leq n}|
\end{align*}
for all $n \in \mathcal{U}$.  Since $A, C, E \subseteq X$, $B, D, F \subseteq Y$, and $X \cap Y = \emptyset$, we have \begin{align*}
|A^{\leq n}| + |D^{\leq n}| &\leq |B^{\leq n}| + |C^{\leq n}| ; \\
|C^{\leq n}| + |F^{\leq n}| &\leq |D^{\leq n}| + |E^{\leq n}|.
\end{align*}
Thus, $|(A \cup F)^{\leq n}| = |A^{\leq n}| + |F^{\leq n}| \leq |B^{\leq n}| + |E^{\leq n}| = |(B \cup E)^{\leq n}|$ for all $n \in \mathcal{U}$, i.e.\ $B \cup E \onto A \cup F$.  Hence $(A,B) \preceq (E,F)$, and $\preceq$ is transitive.
\end{proof}


\subsection{Sign sequences}

A sign sequence $T$ is an ordered pair $(T^+,T^-)$ of disjoint finite subsets of a totally ordered set $(\mathcal{U}, \geq)$.  Let $T^{\pm} = T^+ \cup T^-$, and $|T| = |T^+| - |T^-|$.  In addition,
we write $\Gamma(T)$ for $\Gamma(T^+\text{$\to$} T^-)$, and
\begin{align*}
P^+(T) = P_{T^+ \to T^-}(T^+), \qquad U^+(T) = U_{T^+ \to T^-}(T^+); \\
P^-(T) = P_{T^+ \to T^-}(T^-), \qquad U^-(T) = U_{T^+ \to T^-}(T^-).
\end{align*}
Furthermore, let
$$V(T) = \{ v \in T^- \mid (T^+)^{>v} \onto (T^-)^{>v} \}.$$
Thus each element of $V(T)$ indexes the $\diagdown$ at a `valley' in $\Gamma(T)$ where the subpath to its right is a Dyck path.

The {\em latticed paths for $T$} are defined as follows: $\Gamma(T)$ is the generic latticed path for $T$, and all other latticed paths for $T$ are connected paths obtained by replacing one or more `ridges' in the generic latticed path by horizontal line segment(s).  Denote the set of latticed paths for $T$ by $\mathbb{L}(T)$, and for each $\gamma \in \mathbb{L}(T)$, define its norm $\| \gamma \|$ as one plus the total number of $\diagup$ and $\diagdown$ in it.

\begin{eg*}
Suppose that $\Gamma(T)$ is
\medskip
\begin{center}
\begin{tikzpicture}[scale=0.5]
\draw [semithick] (-1,1) -- (0,0) -- (1,1) -- (2,2) -- (3,1) -- (4,2) -- (5,1) -- (6,0) -- (7,-1) -- (8,0);
\fill (-1,1) circle (2pt);
\fill (0,0) circle (2pt);
\fill (1,1) circle (2pt);
\fill (2,2) circle (2pt);
\fill (3,1) circle (2pt);
\fill (4,2) circle (2pt);
\fill (5,1) circle (2pt);
\fill (6,0) circle (2pt);
\fill (7,-1) circle (2pt);
\fill (8,0) circle (2pt);
\end{tikzpicture}
\end{center}
The following are all the non-generic latticed paths for $T$:
\medskip
\begin{center}
\begin{tikzpicture}[scale=0.3]
\draw [semithick] (-1,1)-- (0,0) -- (1,1) -- (2,1) -- (3,1) -- (4,2) -- (5,1) -- (6,0) -- (7,-1) -- (8,0);
\fill (-1,1) circle (2pt);
\fill (0,0) circle (2pt);
\fill (1,1) circle (2pt);
\fill (2,1) circle (2pt);
\fill (3,1) circle (2pt);
\fill (4,2) circle (2pt);
\fill (5,1) circle (2pt);
\fill (6,0) circle (2pt);
\fill (7,-1) circle (2pt);
\fill (8,0) circle (2pt);
\draw [semithick,dotted] (1,1) -- (2,2) -- (3,1);
\end{tikzpicture}\quad
\begin{tikzpicture}[scale=0.3]
\draw [semithick] (-1,1)-- (0,0) -- (1,1) -- (2,2) -- (3,1) -- (4,1) -- (5,1) -- (6,0) -- (7,-1) -- (8,0);
\fill (-1,1) circle (2pt);
\fill (0,0) circle (2pt);
\fill (1,1) circle (2pt);
\fill (2,2) circle (2pt);
\fill (3,1) circle (2pt);
\fill (4,1) circle (2pt);
\fill (5,1) circle (2pt);
\fill (6,0) circle (2pt);
\fill (7,-1) circle (2pt);
\fill (8,0) circle (2pt);
\draw [semithick,dotted] (3,1) -- (4,2) -- (5,1);
\end{tikzpicture} \quad
\begin{tikzpicture}[scale=0.3]
\draw [semithick] (-1,1)-- (0,0) -- (1,1) -- (2,1) -- (3,1) -- (4,1) -- (5,1) -- (6,0) -- (7,-1) -- (8,0);
\fill (-1,1) circle (2pt);
\fill (0,0) circle (2pt);
\fill (1,1) circle (2pt);
\fill (2,1) circle (2pt);
\fill (3,1) circle (2pt);
\fill (4,1) circle (2pt);
\fill (5,1) circle (2pt);
\fill (6,0) circle (2pt);
\fill (7,-1) circle (2pt);
\fill (8,0) circle (2pt);
\draw [semithick,dotted] (1,1) -- (2,2) -- (3,1) -- (4,2) -- (5,1);
\end{tikzpicture}
\quad
\begin{tikzpicture}[scale=0.3]
\draw [semithick] (-1,1)-- (0,0) -- (1,0) -- (2,0) -- (3,0) -- (4,0) -- (5,0) -- (6,0) -- (7,-1) -- (8,0);
\fill (-1,1) circle (2pt);
\fill (0,0) circle (2pt);
\fill (1,0) circle (2pt);
\fill (2,0) circle (2pt);
\fill (3,0) circle (2pt);
\fill (4,0) circle (2pt);
\fill (5,0) circle (2pt);
\fill (6,0) circle (2pt);
\fill (7,-1) circle (2pt);
\fill (8,0) circle (2pt);
\draw [semithick,dotted] (0,0) -- (1,1) -- (2,2) -- (3,1) -- (4,2) -- (5,1) -- (6,0);
\end{tikzpicture}\end{center}
The norm of the generic latticed path is 10, while the norms of the non-generic ones are 8, 8, 6, 4 respectively.
\end{eg*}

For $a,b \in T^{\pm}$ with $a < b$, we define the following sign subsequences of $T$:
\begin{align*}
T_a^b &= ((T^+)^b_a, (T^-)_a^b) ; \\
T_a &= ((T^+)^{>a}, (T^-)^{>a}); \\
T^b &= ((T^+)^{<b}, (T^-)^{<b}).
\end{align*}

For convenience, we define the empty path to be the only latticed path for $T_a^a$, with zero norm.

Let $A, B \subseteq T^{\pm}$ such that $A \leftrightarrow B$, and let $\pi = \pi_{A\to B}$.  A {\em well-nested latticed path for $(T,A,B)$} is a collection $(\gamma_a)_{a\in A}$ such that for each $a \in A$, $\gamma_a \in \mathbb{L}(T_a^{\pi(a)})$, and no part of $\gamma_a$ falls below the subpath of $\gamma_{a'}$ between $a$ and $\pi(a)$ whenever $a' < a < \pi(a) < \pi(a')$.  We denote the set of well-nested latticed paths for $(T,A,B)$ by $\Omega(T_A^B)$, and for each $\omega = (\gamma_a)_{a\in A} \in \Omega(T_A^B)$, we define its norm $\|\omega\|$ as $\sum_{a\in A} \|\gamma_a\|$.


\begin{rem}
Our definition of sign sequence generalises the original one used by Kleshchev in \cite{Kleshchev} (and subsequently used in \cite{CMT}).  His definition \cite[Definition 1.1]{Kleshchev} corresponds naturally to our sign sequences $T$ with $T^{\pm} = \{1,2,\dotsc, |T^{\pm}|\}$, naturally ordered.  For such a sign sequence $T$,  and $\gamma \in \mathbb{L}(T)$, let $$X_{\gamma} = \{ r \in T^{\pm} \mid r \text{ indexes $\diagdown$ in $\gamma \}$}.$$ Then $X_{\gamma}$ is a latticed subset for $T$ in the sense of \cite[Definition 1.8]{Kleshchev}.  Furthermore, $\| \gamma \| = 1 + 2|X_{\gamma}| + |T|$.  Conversely, given a latticed subset $X$ for $T$, one can find a unique $\gamma \in \mathbb{L}(T)$ such that $X_\gamma = X$.  There is thus a one-to-one correspondence between latticed paths and latticed subsets.  We leave the proof of these facts as a combinatorial exercise for the reader.
\end{rem}


\section{Main results} \label{S:main}

We state and prove the main results in this section, with the exception of Theorem \ref{T:d}, whose proof is postponed to the next section.

Let $\lambda \in \mathcal{P}$, and let $r \in \mathbb{Z}_e$.  Denote the sets of removable $r$-nodes and indent $r$-nodes of $\lambda$ by $R_r(\lambda)$ and $I_r(\lambda)$ respectively.  Let $T_r(\lambda)$ be the sign sequence $(R_r(\lambda), I_r(\lambda))$, where $R_r(\lambda) \cup I_r(\lambda)$ is totally ordered as follows: $a > b$ if and only if $a$ is on the right of $b$.

For $X \subseteq I_r(\lambda)$ and $Y \subseteq R_r(\lambda)$, we write $\lambda \up{X} \down{Y}$ for the partition obtained from $\lambda$ by adding all the indent $r$-nodes $x$ with $x \in X$ and removing all the removable $r$-nodes $y$ with $y \in Y$.  We extend this notation to all subsets $X, Y \subseteq R_r(\lambda) \cup I_r(\lambda)$ such that $X\setminus Y \subseteq I_r(\lambda)$ and $Y \setminus X \subseteq R_r(\lambda)$, i.e.\ for these subsets, $\lambda \up{X} \down{Y} = \lambda \up{X \setminus Y} \down{Y \setminus X}$.

We note that $\t_r^\lambda = \t_r^{\mu}$ (see Section \ref{S:preorder}) if and only if $\mu = \lambda \up{X} \down{Y}$ for some $X \subseteq I_r(\lambda)$ and $Y \subseteq R_r(\lambda)$.

\begin{lem} \label{L:dom}
Let $\lambda \in \mathcal{P}_n$ and $r \in \mathbb{Z}_e$. Let $A \subseteq I_r(\lambda)$ and $B \subseteq R_r(\lambda)$ with $|A| = |B|$.  If $d_{\lambda \up{A} \down{B},\lambda}(v) \ne 0$ or $d_{\lambda \up{A}\down{B}, \lambda}^l \ne 0$, then $A \onto B$.
\end{lem}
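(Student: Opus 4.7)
My plan is to deduce $A \onto B$ from the dominance order, using the fact that both nonvanishing hypotheses force a Jantzen-order relation that refines dominance.

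\medskip

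\noindent\textbf{Step 1 (reduction to dominance).} Set $\mu = \lambda\up{A}\down{B}$. If $d_{\mu,\lambda}(v) \ne 0$, then Theorem \ref{T:vdecomp}(iv) gives $\mu \leq_J \lambda$; and if $d^l_{\mu,\lambda} \ne 0$, then Lemma \ref{L:decompJorder} gives the same conclusion. Since the Jantzen order refines the dominance order (noted just after the definition of $\to$ in Section \ref{S:preorder}), I obtain $\mu \trianglelefteq \lambda$ and in particular $|\mu|=|\lambda|=n$ (which is also visible from $|A|=|B|$).

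\medskip

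\noindent\textbf{Step 2 (column-sum reformulation).} I will translate $\mu \trianglelefteq \lambda$ into a statement about conjugate partitions: $\mu \trianglelefteq \lambda$ is equivalent to $\sum_{j\leq k}\mu'_j \geq \sum_{j \leq k}\lambda'_j$ for all $k \geq 1$, where $\lambda'_j$ counts nodes of $\lambda$ in column $j$. The next step is to compute this difference. Each $a = (i,\lambda_i+1) \in A \subseteq I_r(\lambda)$ contributes a single new node in column $c_a := \lambda_i+1$ to $\mu$, and each $b = (i',\lambda_{i'}) \in B \subseteq R_r(\lambda)$ deletes a single node in column $c_b := \lambda_{i'}$ from $\lambda$. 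Therefore
\[
\sum_{j \leq k} \mu'_j - \sum_{j \leq k} \lambda'_j \;=\; |\{a \in A : c_a \leq k\}| - |\{b \in B : c_b \leq k\}|.
\]
Because the total order on $R_r(\lambda) \cup I_r(\lambda)$ in the sign sequence $T_r(\lambda)$ is precisely the column order, the right-hand side equals $|A^{\leq k}| - |B^{\leq k}|$ (extending the notation to arbitrary $k \in \mathbb{Z}$).

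\medskip

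\noindent\textbf{Step 3 (conclusion).} Combining the two previous steps yields $|A^{\leq k}| \geq |B^{\leq k}|$ for every $k$, and this is exactly the characterization (given in Section 3.2 of the paper) that $\Gamma(A \to B)$ is a Dyck path, i.e.\ $A \onto B$.

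\medskip

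\noindent\textbf{Anticipated obstacle.} The only point requiring care is the bookkeeping in Step~2, specifically that the sign-sequence ordering on $r$-nodes coincides with the column ordering on the very nodes $a$ and $b$ whose addition/removal changes the column counts $\lambda'_j$. Once this identification is made, the equivalence between dominance and the Dyck path condition is immediate (given $|A|=|B|$). No abacus or $\beta$-set manipulation should be needed, since the column-sum picture is the most direct bridge between the partition-theoretic input and the combinatorial output.
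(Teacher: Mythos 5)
Your proposal is correct and takes essentially the same route as the paper: reduce to $\lambda \geq_J \lambda\up{A}\down{B}$ via Theorem 2.4(iv) or Lemma 2.2, pass to the dominance order, and deduce $A \onto B$. The paper treats the last implication (dominance implies $A \onto B$) as immediate; your Step 2 simply spells out the column-sum bookkeeping behind it, correctly using that the total order on $R_r(\lambda) \cup I_r(\lambda)$ is by column and that distinct elements of $T^{\pm}$ occupy distinct columns.
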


\begin{proof}
If $d_{\lambda \up{A}\down{B}, \lambda}(v) \ne 0$ or $d_{\lambda \up{A}\down{B}, \lambda}^l \ne 0$, then $\lambda \geq_J \lambda \up{A}\down{B}$ by Theorem \ref{T:vdecomp}(iv) and Lemma \ref{L:decompJorder}. In particular, $\lambda$ dominates $\lambda \up{A}\down{B}$, so that $A \onto B$.
\end{proof}

\begin{prop} \label{P:impt}
Let $\lambda \in \mathcal{P}_n$ and $r \in \mathbb{Z}_e$. Let $f_r (G(\lambda)) = \sum_{\tau \in \mathcal{P}_{n+1}} a_{\tau\lambda}(v) G(\tau)$.
\begin{enumerate}
\item If $\sigma \in \mathcal{P}_{n+1}$ such that $\langle f_r(G(\lambda)), \sigma \rangle \ne 0$, then $\t_r^{\sigma} \leq \t_r^{\lambda}$.
\item If $\sigma \in \mathcal{P}_{n+1}$ such that $a_{\sigma\lambda}(v) \ne 0$, then $\t_r^{\sigma} \leq \t_r^{\lambda}$.
\item If $A \subseteq I_r(\lambda)$ and $B \subseteq R_r(\lambda)$ with $|A| = |B| + 1$, then
$$
\langle f_r (G(\lambda)), \lambda \up{A} \down{B} \rangle =
\begin{cases}
\sum_{(C,D) \preceq (A,B)} a_{\lambda \up{C} \down{D},\lambda}(v) d_{\lambda \up {A} \down{B},\lambda \up{C}\down{D}}(v), &\text{if } A \onto B; \\
0, &\text{otherwise.}
\end{cases}
$$
\end{enumerate}
\end{prop}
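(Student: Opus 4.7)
The plan is to exploit the canonical basis expansion
$$G(\lambda) = \sum_\mu d_{\mu\lambda}(v)\, \mu,$$
whose support lies in $\{\mu : \mu \leq_J \lambda\}$ by Theorem \ref{T:vdecomp}(iv), together with the explicit formula \eqref{E:f_r} for $f_r$. The central observation is that adding an indent $r$-node to any partition $\mu$ produces a partition in the same $\t_r$-equivalence class as $\mu$.

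For part (i), I would write $f_r(G(\lambda)) = \sum_\mu d_{\mu\lambda}(v) f_r(\mu)$, note that any $\sigma$ with nonzero coefficient in this sum arises from some $\mu \leq_J \lambda$ by adding an indent $r$-node, and combine the central observation with Lemma \ref{L:Jorder->preorder} to conclude $\t_r^\sigma = \t_r^\mu \leq \t_r^\lambda$.

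For part (ii), I would fix a total order on $\mathcal{P}_{n+1}$ refining $\leq_J$ and, arguing by contradiction, take $\sigma$ to be maximum in this total order among partitions with $a_{\sigma\lambda}(v) \ne 0$ and $\t_r^\sigma \not\leq \t_r^\lambda$. The identity
$$\langle f_r(G(\lambda)), \sigma \rangle = \sum_\tau a_{\tau\lambda}(v)\, d_{\sigma\tau}(v)$$
has nonzero summands only for $\tau \geq_J \sigma$ by the unitriangularity of $(d_{\sigma\tau}(v))$; for any $\tau >_J \sigma$ with $a_{\tau\lambda}(v) \ne 0$, maximality forces $\t_r^\tau \leq \t_r^\lambda$, whence Lemma \ref{L:Jorder->preorder} gives $\t_r^\sigma \leq \t_r^\tau \leq \t_r^\lambda$, a contradiction. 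Hence only $\tau = \sigma$ contributes, yielding $\langle f_r(G(\lambda)), \sigma \rangle = a_{\sigma\lambda}(v) \ne 0$, which contradicts part (i).

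For part (iii), I would begin from
$$\langle f_r(G(\lambda)), \lambda\up{A}\down{B}\rangle = \sum_\tau a_{\tau\lambda}(v)\, d_{\lambda\up{A}\down{B}, \tau}(v),$$
and observe that nonzero summands require both $\t_r^\tau \leq \t_r^\lambda$ (by part (ii)) and $\t_r^\lambda = \t_r^{\lambda\up{A}\down{B}} \leq \t_r^\tau$ (from $d \ne 0$ via Theorem \ref{T:vdecomp}(iv) and Lemma \ref{L:Jorder->preorder}), forcing $\t_r^\tau = \t_r^\lambda$ and hence $\tau = \lambda\up{C}\down{D}$ with $C \subseteq I_r(\lambda)$, $D \subseteq R_r(\lambda)$ and $|C|-|D| = 1$. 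The crux is then to convert the Jantzen-dominance $\lambda\up{A}\down{B} \leq_J \lambda\up{C}\down{D}$ (needed for $d \ne 0$) into the pairing condition $(C,D) \preceq (A,B)$ of Lemma \ref{L:order}; I expect this to reduce to a direct $\beta$-set computation extending the argument in the proof of Lemma \ref{L:dom}. For the case $A \not\onto B$, I would additionally establish that $a_{\lambda\up{C}\down{D},\lambda}(v) \ne 0$ forces $C \onto D$, so that $\lambda$ dominates $\lambda\up{C}\down{D}$; transitivity of dominance with $\lambda\up{C}\down{D}$ dominating $\lambda\up{A}\down{B}$ then contradicts $A \not\onto B$, making every summand vanish.
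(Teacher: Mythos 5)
Your proof of part (i) coincides with the paper's. Your proof of part (ii) takes a genuinely different route: the paper writes
\[
\langle f_r(G(\lambda)),\sigma\rangle \;=\; a_{\sigma\lambda}(v)+\sum_{\tau\ne\sigma}a_{\tau\lambda}(v)\,d_{\sigma\tau}(v)
\]
and concludes that this is nonzero because $a_{\tau\lambda}(v)\in\mathbb{N}_0[v,v^{-1}]$ (a nontrivial positivity result, \cite[Prop.~2.4]{CMT}) and $d_{\sigma\tau}(v)\in v\mathbb{N}_0[v]$; your maximality argument on a total order refining $\leq_J$ avoids this positivity input entirely and is correct. That is a more elementary derivation of the \emph{statement} of (ii).

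However, there is a genuine gap in your part (iii). The relation $\preceq$ of Lemma~\ref{L:order} is only defined on pairs $(C,D)$ with $C\onto D$, so for the asserted formula to hold you must show that every $\tau=\lambda\up{C}\down{D}$ contributing a nonzero summand satisfies $C\onto D$ --- and this is needed in both cases, not just when $A\not\onto B$. Your ``crux'' step expects $(C,D)\preceq(A,B)$ to follow from the Jantzen-dominance $\lambda\up{A}\down{B}\leq_J\lambda\up{C}\down{D}$ by a $\beta$-set computation, but that computation (Lemma~\ref{L:dom} with base partition $\lambda\up{C}\down{D}$) yields only $(A\cup D)\onto(B\cup C)$; it does \emph{not} yield $C\onto D$. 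One can check that $A\onto B$, $(A\cup D)\onto(B\cup C)$ and $|A|-|B|=|C|-|D|$ together do not imply $C\onto D$. The paper obtains $C\onto D$ from the stronger fact (its equation labelled (E:f\_rG2), established inside the proof of part (ii) using positivity) that $a_{\sigma\lambda}(v)\ne 0$ already forces $\langle f_r(G(\lambda)),\sigma\rangle\ne 0$, combined with the preliminary observation that $\langle f_r(G(\lambda)),\lambda\up{X}\down{Y}\rangle\ne 0$ implies $X\onto Y$. Because you proved (ii) without positivity, this mechanism is no longer available to you; your sketch acknowledges needing ``$a_{\lambda\up{C}\down{D},\lambda}(v)\ne 0$ forces $C\onto D$'' but gives no argument for it. The gap is fillable by re-running a maximality argument: take $\sigma=\lambda\up{C}\down{D}$ maximal with $a_{\sigma\lambda}(v)\ne 0$, $\t_r^\sigma=\t_r^\lambda$ and $C\not\onto D$; then any $\tau=\lambda\up{C'}\down{D'}>_J\sigma$ with $a_{\tau\lambda}(v)d_{\sigma\tau}(v)\ne 0$ has $C'\onto D'$ by maximality and $(C\cup D')\onto(D\cup C')$ by Lemma~\ref{L:dom}, which together force $C\onto D$; if no such $\tau$ exists then $\langle f_r(G(\lambda)),\sigma\rangle=a_{\sigma\lambda}(v)\ne 0$ and the preliminary observation again gives $C\onto D$. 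But as written your proposal does not supply this step, and without it part (iii) is incomplete.
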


\begin{proof} \hfill
\begin{enumerate}
\item We have $f_r(G(\lambda)) = \sum_{\mu \in \mathcal{P}_n} d_{\mu\lambda}(v) f_r(\mu)$, so that if $\langle f_r(G(\lambda)), \sigma \rangle \ne 0$, then $\sigma = \mu \up{\{c\}}$ for some $\mu \in \mathcal{P}_n$ (with $d_{\mu\lambda}(v) \ne 0$) and $c \in I_r(\mu)$ by \eqref{E:f_r}.  Thus
    $$
    \t_r^\sigma = \t_r^{\mu} \leq \t_r^\lambda
    $$
    by Lemmas \ref{L:decompJorder} and \ref{L:Jorder->preorder}.

\item We have 
\begin{equation} \label{E:f_rG}
\langle f_r(G(\lambda)), \sigma \rangle = \langle \sum_{\tau \in \mathcal{P}_{n+1}} a_{\tau\lambda}(v) G(\tau), \sigma\rangle =  \sum_{\tau \in \mathcal{P}_{n+1}} a_{\tau\lambda}(v) d_{\sigma\tau}(v).
\end{equation}
Thus if $a_{\sigma\lambda}(v) \ne 0$, then
\begin{equation} \label{E:f_rG2}
\langle f_r(G(\lambda)), \sigma \rangle = a_{\sigma \lambda}(v) + \sum_{\tau \ne \sigma} a_{\tau \lambda}(v) d_{\sigma \tau}(v) \ne 0
\end{equation}
since $a_{\tau \lambda}(v) \in \mathbb{N}_0[v,v^{-1}]$ by \cite[Proposition 2.4]{CMT} and $d_{\sigma \tau}(v) \in v\mathbb{N}_0[v]$ by Theorem \ref{T:vdecomp}(ii) for all $\tau$.  Thus, $\t_r^{\sigma} \leq \t_r^{\lambda}$ by part (i).
\item  We note first that if $X \subseteq I_r(\lambda)$ and $Y \subseteq R_r(\lambda)$ with $|X| = |Y|+1$ such that $\langle f_r (G(\lambda)), \lambda \up{X} \down{Y} \rangle \ne 0$, then $X \onto Y$.  This is because
    $$
    0 \ne \langle f_r (G(\lambda)), \lambda \up{X} \down{Y} \rangle = \langle \sum_{\mu} d_{\mu\lambda}(v) f_r(\mu), \lambda \up{X}\down{Y} \rangle,$$
so that $d_{\lambda \up{X}\down{Y \cup\{z\}}, \lambda}(v) \ne 0$ for some $z \in R_r(\lambda) \cup X \setminus Y$ by \eqref{E:f_r}.  Thus $X \onto Y \cup \{z\}$ by Lemma \ref{L:dom}, and hence $X \onto Y$.

Now, assume that $\langle f_r (G(\lambda)), \lambda \up{A} \down{B} \rangle \ne 0$.  Then $A \onto B$.
For each $\sigma \in \mathcal{P}_{n+1}$ such that $a_{\sigma\lambda}(v) d_{\lambda \up{A} \down{B}, \sigma}(v) \ne 0$, we have, clearly, $a_{\sigma\lambda}(v),\ d_{\lambda \up{A} \down{B}, \sigma}(v) \ne 0$.  By part (ii) and Lemmas \ref{T:vdecomp}(iv) and \ref{L:Jorder->preorder}, this gives $$
\t_r^{\lambda} \geq \t_r^{\sigma} \geq \t_r^{\lambda \up {A}\down{B}} = \t_r^{\lambda},$$
so that we must have equality throughout.  Thus $\sigma = \lambda \up {C} \down {D}$ for some $C \subseteq I_r(\lambda)$ and $D \subseteq R_r(\lambda)$ with $|C|= |D|+1$.  Furthermore, since $a_{\sigma\lambda}(v) \ne 0$, we also have
$$\langle f_r (G(\lambda)), \lambda \up{C}\down{D} \rangle = \langle f_r (G(\lambda)), \sigma \rangle \ne 0$$ by \eqref{E:f_rG2}, so that $C \onto D$.  Finally
$$d_{\lambda \up{A}\down{B}, \lambda \up{C} \down {D}}(v) = d_{\lambda \up{A} \down{B},\sigma}(v) \ne 0$$ implies that $(A \cup D) \onto (B \cup C)$ by Lemma \ref{L:dom}, so that $(C,D) \preceq (A,B)$.  Part (iii) now follows from \eqref{E:f_rG}.
\end{enumerate}
\end{proof}

Let $\lambda \in \mathcal{P}_n$ and $r \in \mathbb{Z}_e$.  The induced module $\rInd^{\mathcal{S}_{n+1}} P^{\lambda}$ is projective, so that
$$
\rInd^{\mathcal{S}_{n+1}} P^{\lambda} = \bigoplus_{\tau \in \mathcal{P}_{n+1}} a_{\tau\lambda}^l P^{\tau},
$$
where $a_{\tau\lambda}^l \in \mathbb{N}_0$ satisfies
\begin{align}
a_{\tau\lambda}^l &= \dim_{\mathbb{F}} \Hom_{\mathcal{S}_{n+1}} (\rInd^{\mathcal{S}_{n+1}} P^{\lambda}, L^{\tau}) \notag \\
&= \dim_{\mathbb{F}} \Hom_{\mathcal{S}_{n}} (P^{\lambda}, \rRes_{\mathcal{S}_n} L^{\tau}) \notag \\
&= [\rRes_{\mathcal{S}_n} L^{\tau} : L^{\lambda}] \label{E:a}
\end{align}

We have an analogue of Proposition \ref{P:impt} for $q$-Schur algebras.

\begin{prop} \label{P:imptS}
Let $\lambda \in \mathcal{P}_n$ and $r \in \mathbb{Z}_e$. Let $\rInd P^\lambda = \bigoplus_{\tau \in \mathcal{P}_{n+1}} a_{\tau\lambda}^l P^\tau$.  Suppose that in the Grothendieck group
$$
[\rInd P^{\lambda}] = \sum_{\tau \in \mathcal{P}_{n+1}} c^l_{\tau\lambda} [\Delta^{\tau}].
$$
\begin{enumerate}
\item If $\sigma \in \mathcal{P}_{n+1}$ such that $c^l_{\sigma\lambda} \ne 0$, then $\t_r^{\sigma} \leq \t_r^{\lambda}$.
\item If $\sigma \in \mathcal{P}_{n+1}$ such that $a^l_{\sigma\lambda} \ne 0$, then $\t_r^{\sigma} \leq \t_r^{\lambda}$.
\item If $A \subseteq I_r(\lambda)$ and $B \subseteq R_r(\lambda)$ with $|A| = |B| + 1$, then
$$
c^l_{\lambda \up{A}\down{B}, \lambda} =
\begin{cases}
\sum_{(C,D) \preceq (A,B)} a^l_{\lambda \up{C} \down{D},\lambda} d^l_{\lambda \up {A} \down{B},\lambda \up{C}\down{D}}, &\text{if } A \onto B; \\
0, &\text{otherwise.}
\end{cases}
$$
\end{enumerate}
\end{prop}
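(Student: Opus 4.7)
The proof closely mirrors that of Proposition \ref{P:impt}, with the $v$-decomposition numbers replaced by their characteristic-$l$ counterparts, and the key non-cancellation step replaced by nonnegativity of all entries together with the fact that $d^l_{\sigma\sigma} = 1$.

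For part (i), I would expand $[\rInd P^{\lambda}] = \sum_{\mu} d^l_{\mu\lambda}\,[\rInd \Delta^{\mu}]$ using the Weyl filtration of $P^{\lambda}$. For any $\mu$ with $d^l_{\mu\lambda} \ne 0$, Lemma \ref{L:decompJorder} gives $\lambda \geq_J \mu$, so $\t_r^{\lambda} \geq \t_r^{\mu}$ by Lemma \ref{L:Jorder->preorder}. Theorem \ref{T:Weylbranch} expresses $[\rInd \Delta^{\mu}]$ as a sum of $[\Delta^{\sigma}]$ with $\sigma$ obtained from $\mu$ by adding an indent $r$-node, so $\t_r^{\sigma} = \t_r^{\mu} \leq \t_r^{\lambda}$. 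For part (ii), writing $c^l_{\sigma\lambda} = \sum_{\tau} a^l_{\tau\lambda}\,d^l_{\sigma\tau}$, the hypothesis $a^l_{\sigma\lambda} \ne 0$ forces $c^l_{\sigma\lambda} \geq a^l_{\sigma\lambda}\cdot d^l_{\sigma\sigma} = a^l_{\sigma\lambda} > 0$ because every $a^l$ and $d^l$ is nonnegative; part (i) then applies.

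For part (iii), I would first establish that $c^l_{\lambda\up{X}\down{Y},\lambda} \ne 0$ (with $|X|=|Y|+1$) forces $X \onto Y$: expanding as $\sum_{\mu} d^l_{\mu\lambda}\,[\rInd \Delta^{\mu} : \Delta^{\lambda\up{X}\down{Y}}]$ and invoking Theorem \ref{T:Weylbranch}, some $\mu = \lambda\up{X}\down{Y\cup\{z\}}$ must satisfy $d^l_{\mu\lambda} \ne 0$, so Lemma \ref{L:dom} yields $X \onto Y \cup \{z\}$, hence $X \onto Y$. This handles the ``otherwise'' branch. When $A \onto B$, I would decompose $c^l_{\lambda\up{A}\down{B},\lambda} = \sum_{\tau} a^l_{\tau\lambda}\,d^l_{\lambda\up{A}\down{B},\tau}$; for each nonzero contributing $\tau$, part (ii) gives $\t_r^{\tau} \leq \t_r^{\lambda}$ while Lemmas \ref{L:decompJorder} and \ref{L:Jorder->preorder} applied to $d^l_{\lambda\up{A}\down{B},\tau}$ give $\t_r^{\tau} \geq \t_r^{\lambda\up{A}\down{B}} = \t_r^{\lambda}$. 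Equality throughout forces $\tau = \lambda\up{C}\down{D}$ for some $C \subseteq I_r(\lambda)$, $D \subseteq R_r(\lambda)$ with $|C|=|D|+1$, and the preliminary step (applied to $(C,D)$ via $a^l_{\lambda\up{C}\down{D},\lambda}\ne 0$, which forces $c^l_{\lambda\up{C}\down{D},\lambda}\ne 0$ by the non-cancellation used in part (ii)) yields $C \onto D$.

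The main obstacle is the final bookkeeping: applying Lemma \ref{L:dom} to $\nu = \lambda\up{C}\down{D}$ and the nonzero $d^l_{\lambda\up{A}\down{B},\nu}$ to deduce $(C,D) \preceq (A,B)$. One must rewrite $\lambda\up{A}\down{B} = \nu\up{A'}\down{B'}$, where $A' = (A\cup D)\setminus(B\cup C) \subseteq I_r(\nu)$ and $B' = (B\cup C)\setminus(A\cup D) \subseteq R_r(\nu)$, using $I_r(\nu) = (I_r(\lambda)\setminus C)\cup D$ and $R_r(\nu) = (R_r(\lambda)\setminus D)\cup C$, check $|A'|=|B'|$ from the hypotheses $|A|=|B|+1$ and $|C|=|D|+1$, and observe that $A'\onto B'$ is equivalent to $(A\cup D)\onto (B\cup C)$ since shared elements pair with themselves. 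By Lemma \ref{L:order}, this is precisely $(C,D) \preceq (A,B)$, completing the derivation of the formula.
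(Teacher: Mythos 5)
Your proposal is correct and follows precisely the route the paper intends: the paper simply states that ``the proof of Proposition \ref{P:imptS} is entirely analogous to that of Proposition \ref{P:impt},'' and you have carried out exactly that translation, replacing the non-cancellation argument over $\mathbb{N}_0[v,v^{-1}]$ and $v\mathbb{N}_0[v]$ with nonnegativity of the integers $a^l_{\tau\lambda}$ and $d^l_{\sigma\tau}$ together with $d^l_{\sigma\sigma}=1$. The final bookkeeping you spell out --- rewriting $\lambda\up{A}\down{B}$ relative to $\nu=\lambda\up{C}\down{D}$ and invoking Lemma \ref{L:dom} to get $(A\cup D)\onto(B\cup C)$, hence $(C,D)\preceq(A,B)$ --- matches the concluding step in the proof of Proposition \ref{P:impt}(iii).
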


The proof of Proposition \ref{P:imptS} is entirely analogous to that of Proposition \ref{P:impt}.

\begin{thm} \label{T:d}
Let $\lambda$ be a partition and let $A \subseteq I_r(\lambda)$, $B \subseteq R_r(\lambda)$.
\begin{enumerate}
\item Then
 $$d_{\lambda \up{A}\down{B}, \lambda}(v) =
 \begin{cases}
 \sum_{\omega \in \Omega(T_r(\lambda)_A^B)} v^{\|\omega\|}, &\text{if } A \leftrightarrow B; \\
 0, &\text{otherwise.}
 \end{cases}
 $$
\item Suppose that $f_r(G(\lambda)) = \sum_{\mu} a_{\mu \lambda}(v) G(\mu)$.  Then $a_{\lambda \up{A} \down{B},\lambda}(v) = 0$ whenever $|A| = |B| + 1$ and $A \onto B$, unless $A = \{a\} \subseteq  V(T_r(\lambda))$ (and $B = \emptyset$), in which case,
    $$
    a_{\lambda\up{\{a\}},\lambda}(v) = [1 + |(U^+(T_r(\lambda)))^{>a}|]_v,
    $$
    where here, and hereafter, for $k \in \mathbb{Z}^+$, we write $[k]_v$ for $v^{-k+1} + v^{-k+3} + \dotsb + v^{k-1}$.
\end{enumerate}
\end{thm}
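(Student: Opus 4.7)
The plan is to prove (1) and (2) together by a simultaneous induction on $|\lambda|$, interleaving them so that (1) for partitions up to size $n$ feeds into (2) for partitions of size $n$, which in turn feeds back into (1) for partitions of size $n+1$. The base case is handled by direct inspection of small $\lambda$ (where either $I_r(\lambda)$ or $R_r(\lambda)$ is trivial), using $d_{\lambda\lambda}(v) = 1$ from Theorem \ref{T:vdecomp}(i) and the obvious $a_{\lambda\up{\{a\}},\lambda}(v) = 1 = [1]_v$ when $a$ is the unique indent $r$-node of $\lambda$.

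For the inductive step, the key device is to evaluate the matrix coefficient $\langle f_r(G(\lambda)), \lambda\up{A}\down{B}\rangle$ in two different ways for each pair $(A,B)$ with $|A| = |B|+1$ and $A \onto B$. The first expansion uses $G(\lambda) = \sum_\mu d_{\mu\lambda}(v)\,\mu$ together with the explicit formula (\ref{E:f_r}) for the action of $f_r$ on standard basis elements, so that each $d_{\mu\lambda}(v)$ is known inductively by (1) as a sum of monic monomials indexed by well-nested latticed paths. The second uses $f_r(G(\lambda)) = \sum_\tau a_{\tau\lambda}(v) G(\tau)$: by Proposition \ref{P:impt}(iii) the inner product equals $\sum_{(C,D) \preceq (A,B)} a_{\lambda\up{C}\down{D},\lambda}(v)\, d_{\lambda\up{A}\down{B},\lambda\up{C}\down{D}}(v)$.

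Equating these two expressions leaves only the single unknown $a_{\lambda\up{A}\down{B},\lambda}(v)$ on one side, once all strictly smaller pairs $(C,D) \prec (A,B)$ have been resolved inductively. The characterizing properties of the canonical basis then pin it down uniquely: bar-invariance of $f_r(G(\lambda))$ forces each $a_{\tau\lambda}(v)$ to be a Laurent polynomial symmetric in $v \leftrightarrow v^{-1}$, while the triangularity $d_{\mu\mu}(v) = 1$ and $d_{\tau\mu}(v) \in v\mathbb{N}_0[v]$ of Theorem \ref{T:vdecomp} separates the $a$- and $d$-contributions cleanly. The parallel computation one level higher, now with $|A| = |B|$ and the $a$'s of size $n$ in hand, extracts (1) for $\lambda\up{A}\down{B}$ of size $n+1$.

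The main obstacle is the combinatorial identity underlying the comparison of these two expansions: namely, that the direct expansion collapses to a sum over well-nested latticed paths with the asserted weights, and that only the valley case identified in (2) survives. This is the content of Theorem \ref{T:bij}, to which the analytic framework reduces. I expect to attack it by an explicit weight-preserving bijection, with the trickiest cases being those where valleys of $\Gamma(T_r(\lambda))$---which govern when (2) contributes nontrivially via $V(T_r(\lambda))$---interact with the nesting constraints defining $\Omega(T_r(\lambda)_A^B)$. Unfolding the partial order $\preceq$ of Lemma \ref{L:order} combinatorially, and verifying that each ridge-flattening in a latticed path matches coherently with a choice of pair $(C,D)$ on the canonical-basis side, will require a careful case analysis organised around the unpaired elements of the sign sequence.
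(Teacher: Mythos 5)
Your proposal matches the paper's approach: the paper likewise isolates the combinatorial heart as Theorem \ref{T:bij} (a norm-preserving bijection between the index sets $\mathcal{L}_A^B(T)$ and $\mathcal{R}_A^B(T)$), proves its equivalence to Theorem \ref{T:d} by computing $\langle f_r(G(\lambda)), \lambda\up{A}\down{B}\rangle$ in the two ways you describe, runs the interleaved induction on $|\lambda|$ and on the partial order $\preceq$, and uses the bar-symmetry of the $a_{\tau\lambda}(v)$ together with $d_{\sigma\tau}(v)\in v\mathbb{N}_0[v]$ to close the argument. One small correction to your bookkeeping: after invoking the inductive hypothesis and row removal (Theorem \ref{T:vdecomp}(v)) you are left with \emph{two} unknowns, namely $a_{\lambda\up{A}\down{B},\lambda}(v)$ and the single $d_{\lambda\up{A}\down{B},\lambda\up{\{x\}}}(v)$ where $x$ indexes the indent $r$-node on the first row of $\lambda$, and it is precisely the parity/symmetry separation you cite that solves for both simultaneously; the deduction of part (i) at size $n+1$ then follows from row removal together with that byproduct, rather than from a further independent computation.
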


Theorem \ref{T:d} will be proved in the next section. For the remainder of this section, we shall assume Theorem \ref{T:d} and obtain its analogue for the classical Schur algebras.

Recall that the following branching coefficient is obtained by Kleshchev; note that a node $\mathfrak{n} \in R_r(\lambda)$ is normal if and only if $\mathfrak{n} \in U^+(T_r(\lambda))$:

\begin{thm}[{\cite[Theorem 9.3]{Kleshchev}}] \label{T:Kleshchev}
Let $\lambda \in \mathcal{P}_n$ and $r \in \mathbb{Z}_e$.  Let $b \in U^+(T_r(\lambda))$.  Then
$$
[\rRes_{\mathcal{S}_{n-1}} L^{\lambda} : L^{\lambda \down{\{b\}}}] = 1+|(U^+(T_r(\lambda)))^{>b}|.
$$
\end{thm}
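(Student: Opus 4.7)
The plan is to derive Kleshchev's branching rule from Theorem \ref{T:d}(ii) in characteristic zero, and then attempt an ascent to positive characteristic via Proposition \ref{P:imptS}. Writing $\mu = \lambda\down{\{b\}}$, the relation \eqref{E:a} recasts the branching coefficient as $a^l_{\lambda,\mu}$, so the task is to compute this integer.

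For $l=0$, the correspondence between $\mathcal{F}$ and the Grothendieck group of $\mathcal{S}_n$-modules in characteristic zero (recorded after Theorem \ref{T:vdecomp}) gives $a_{\tau,\mu}(1) = a^0_{\tau,\mu}$ for every $\tau$. I would apply Theorem \ref{T:d}(ii) with base partition $\mu$, $A = \{b\}$, and $B = \emptyset$, and need to verify (a) $b \in V(T_r(\mu))$ and (b) $|(U^+(T_r(\mu)))^{>b}| = |(U^+(T_r(\lambda)))^{>b}|$.  Since $e \geq 2$, moving a single $r$-node affects no other $r$-node, so $R_r(\mu) = R_r(\lambda) \setminus \{b\}$ and $I_r(\mu) = I_r(\lambda) \cup \{b\}$. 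In particular $(R_r(\mu))^{>b} = (R_r(\lambda))^{>b}$ and $(I_r(\mu))^{>b} = (I_r(\lambda))^{>b}$, so (a) unpacks to $(R_r(\lambda))^{>b} \onto (I_r(\lambda))^{>b}$, which is precisely the path-theoretic statement that $b$ is an unpaired $\diagup$ in $\Gamma(T_r(\lambda))$, i.e.\ $b \in U^+(T_r(\lambda))$. For (b), flipping $b$ from $\diagup$ to $\diagdown$ shifts the subsequent subpath uniformly down by two units while preserving all relative heights, so the pairing condition between symbols to the right of $b$ is identical in $\Gamma(T_r(\lambda))$ and $\Gamma(T_r(\mu))$, giving $(U^+(T_r(\mu)))^{>b} = (U^+(T_r(\lambda)))^{>b}$ as sets. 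Theorem \ref{T:d}(ii) then yields $a_{\lambda,\mu}(v) = [1+|(U^+(T_r(\lambda)))^{>b}|]_v$, which at $v=1$ gives the claim.

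For general $l$, I would attempt to invert the triangular system of Proposition \ref{P:imptS}(iii) with $A = \{b\}$, $B = \emptyset$, inductively along the partial order $\preceq$. The left-hand side $c^l_{\lambda,\mu}$ is readily computed via the Weyl filtration of $P^\mu$ and Theorem \ref{T:Weylbranch}: it equals $\sum_\sigma d^l_{\sigma,\mu}$ summed over partitions $\sigma$ from which $\lambda$ arises by adding an indent $r$-node. The proper $\preceq$-predecessors of $(\{b\},\emptyset)$ are exactly the pairs $(\{c\},\emptyset)$ with $c \in I_r(\mu)$, $c > b$, so one would solve for $a^l_{\lambda,\mu}$ using the values of $a^l_{\mu\up{\{c\}},\mu}$ and $d^l_{\lambda,\mu\up{\{c\}}}$ at these predecessor data. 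The main obstacle, however, is precisely this positive-characteristic step: although the target answer $1+|(U^+(T_r(\lambda)))^{>b}|$ is $l$-independent, every ingredient in the triangular system depends on $l$ through the adjustment matrix $A_l$, and the required cancellations are not obviously visible. This is why the paper takes Kleshchev's theorem as input rather than reproving it; his original argument sidesteps the issue by working within the modular representation theory of $SL_n$ via good filtrations and the Schur functor.
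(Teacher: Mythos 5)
The paper does not actually prove this theorem: it is imported wholesale as \cite[Theorem 9.3]{Kleshchev} and treated as a black box (this is made explicit in the introduction, which says that Kleshchev's branching result is \emph{assumed}). So there is no in-paper proof to compare your attempt against.

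On the merits of your attempt: the characteristic-zero half is correct and is essentially the computation the paper performs implicitly when it later writes $a^0_{\mu\up{X_2}\down{Y_2},\mu} = a_{\mu\up{X_2}\down{Y_2},\mu}(1)$. Setting $\mu = \lambda\down{\{b\}}$, your observations that $R_r(\mu) = R_r(\lambda)\setminus\{b\}$, $I_r(\mu) = I_r(\lambda)\cup\{b\}$, that $b\in V(T_r(\mu))$ is equivalent to $b\in U^+(T_r(\lambda))$, and that the pairings strictly to the right of $b$ in $\Gamma(T_r(\lambda))$ and $\Gamma(T_r(\mu))$ agree are all sound; together with Theorem \ref{T:d}(ii) and the Varagnolo--Vasserot identification, they yield $[\rRes_{\mathcal{S}_{n-1}} L^{\lambda}:L^{\mu}] = 1 + |(U^+(T_r(\lambda)))^{>b}|$ when $l=0$.

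The genuine gap is the one you yourself flag: the theorem is used in the paper at $l=e$, and your proposed triangular inversion of Proposition \ref{P:imptS}(iii) cannot close in positive characteristic without circularity. The equalities $a^e=a^0$ and $d^e=d^0$ along the $\preceq$-order are exactly what the paper's final theorem \emph{deduces} from Theorem \ref{T:Kleshchev}, so one cannot invoke them to re-derive it. Your attempt therefore proves a special case (the $l=0$ branching coefficient) but does not prove the cited statement; Kleshchev's proof is genuinely external to this paper's Fock-space machinery, relying on the representation theory of $GL_n$ as an algebraic group, tilting/good-filtration arguments, and the Schur functor.
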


\begin{thm}
Suppose that $e$ is a prime integer.  Let $\lambda \in \mathcal{P}_n$ and $r \in \mathbb{Z}_e$.  Let $A \subseteq I_r(\lambda)$ and $B \subseteq R_r(\lambda)$.
\begin{enumerate}
\item If $|A| = |B|$, then
\begin{align*}
d^e_{\lambda \up{A} \down{B}, \lambda} &= d^0_{\lambda\up{A} \down {B},\lambda} \\
&=
\begin{cases}
|\Omega_A^B(T_r(\lambda))|, &\text{if } A \leftrightarrow B; \\
0, &\text{otherwise.}
\end{cases}
\end{align*}
\item If $|B| = |A|+1$ with $B \onto A$ and either $|B| > 1$ or $B=\{b\} \not\subseteq U^+(T_r(\lambda))$, then for $l = e$, we have
$$
[\rRes_{\mathcal{S}_{n-1}} L^{\lambda} : L^{\lambda \down{B} \up{A}}] = 0.
$$
\end{enumerate}
\end{thm}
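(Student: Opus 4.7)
The plan is to prove parts (1) and (2) simultaneously by induction on $n = |\lambda|$, with trivial base case $n = 0$. For the inductive step, assume the theorem holds for all partitions of size less than $n$, and fix $\lambda \in \mathcal{P}_n$. The characteristic-zero half of (1) is immediate: Theorem \ref{T:d}(i) together with Theorem \ref{T:vdecomp}(iii) yields
$$
d^0_{\lambda\up{A}\down{B},\lambda} = \begin{cases} |\Omega_A^B(T_r(\lambda))|, & A \leftrightarrow B, \\ 0, & \text{otherwise,} \end{cases}
$$
and Lemma \ref{L:geq} supplies the lower bound $d^e \geq d^0$. What remains is the reverse inequality for (1) and the vanishing statement of (2), which I will establish jointly.

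The engine is Proposition \ref{P:imptS}(iii) applied to a partition $\lambda^\circ = \lambda \down{B}\up{A} \in \mathcal{P}_{n-1}$ (for the pair $(A,B)$ arising in (2)); then $\lambda = \lambda^\circ\up{B}\down{A}$ with $B \subseteq I_r(\lambda^\circ)$, $A \subseteq R_r(\lambda^\circ)$, $|B| = |A|+1$, and $B \onto A$. For each $(X,Y) \preceq (B,A)$, the proposition gives
$$
c^e_{\lambda^\circ\up{X}\down{Y},\lambda^\circ} \;=\; \sum_{(C,D)\preceq (X,Y)} a^e_{\lambda^\circ\up{C}\down{D},\lambda^\circ}\, d^e_{\lambda^\circ\up{X}\down{Y},\,\lambda^\circ\up{C}\down{D}}.
$$
By Theorem \ref{T:Weylbranch}, the left-hand side equals $\sum_{c \in R_r(\tau)} d^e_{\tau\down{\{c\}},\lambda^\circ}$ with $\tau = \lambda^\circ\up{X}\down{Y}$; every decomposition number appearing lives in $\mathcal{S}_{n-1}$ and hence, by the main inductive hypothesis, equals its characteristic-zero counterpart. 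Thus $c^e_{\tau,\lambda^\circ} = c^0_{\tau,\lambda^\circ}$. On the right, each $a^e_{\lambda^\circ\up{C}\down{D},\lambda^\circ} = [\Res L^{\lambda^\circ\up{C}\down{D}} : L^{\lambda^\circ}]$ is exactly a branching coefficient of type (2) at size $n$, and each $d^e$-factor is a size-$n$ decomposition number of type (1); both sets of unknowns belong to the current inductive step.

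I then run a secondary induction on $(X,Y)$ in the partial order $\preceq$ of Lemma \ref{L:order}. The minimal cases $(X,Y) = (\{v\}, \emptyset)$ with $v \in V(T_r(\lambda^\circ))$ are initialised by Theorem \ref{T:Kleshchev}: since $v \in V(T_r(\lambda^\circ))$ if and only if $v \in U^+(T_r(\lambda^\circ\up{\{v\}}))$, and the portion of the sign sequence strictly to the right of $v$ is unchanged, Kleshchev's formula gives $a^e_{\lambda^\circ\up{\{v\}},\lambda^\circ} = 1 + |(U^+(T_r(\lambda^\circ)))^{>v}|$, matching the characteristic-zero value supplied by Theorem \ref{T:d}(ii) evaluated at $1$. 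For non-minimal $(X,Y)$, subtracting the characteristic-$e$ identity from its characteristic-$0$ analog (obtained from Proposition \ref{P:impt}(iii) at $v = 1$) and invoking the previously resolved levels yields a single relation in the differences $a^e - a^0$ and $d^e - d^0$ at the current level. The combination of the positivity constraints $d^e \geq d^0 \geq 0$ and $a^e \geq 0$, together with the sparsity coming from Theorem \ref{T:d}(ii) (namely $a^0_{\lambda^\circ\up{C}\down{D},\lambda^\circ} = 0$ outside the minimal "valley" cases), then forces entry-by-entry equality $a^e = a^0$ and $d^e = d^0$ — delivering both (2) and the missing inequality in (1).

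The main obstacle will be the bookkeeping needed to disentangle the two kinds of unknown differences at each level of the secondary induction: at a typical step the equation mixes one "new" $a^e$ with one "new" $d^e$, and neither alone is determined. The resolution lies in exploiting the sparsity of nonzero $a^0$'s: a nonzero $d^e - d^0$ would force a compensating nonzero $a^e - a^0$ at some higher $(X',Y')$, but there the corresponding $a^0$ vanishes by Theorem \ref{T:d}(ii), contradicting $a^e \geq 0$. This rigidity propagates back to force every difference to be zero.
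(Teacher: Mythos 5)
Your overall strategy mirrors the paper's: prove (1) and (2) together by induction on $n$, obtain the identity $c^e_{\sigma\mu}=c^0_{\sigma\mu}$ from the size-$(n-1)$ inductive hypothesis and Theorem \ref{T:Weylbranch}, expand both sides via Proposition \ref{P:imptS}(iii), and then try to read off $a^e=a^0$ and $d^e=d^0$ term by term. The minor differences — you pivot from $\lambda^{\circ}=\lambda\down{B}\up{A}$ instead of $\lambda\down{\{\max B\}}$, and you phrase the disentanglement as a secondary induction on $(X,Y)$ rather than treating all $(X_2,Y_2)\preceq(X_1,Y_1)$ at once — do not change the essence of the argument.

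The genuine gap is in your closing step. You write that a nonzero $d^e-d^0$ at some term ``would force a compensating nonzero $a^e-a^0$ at some higher $(X',Y')$, but there the corresponding $a^0$ vanishes by Theorem \ref{T:d}(ii), contradicting $a^e\geq 0$.'' That is not a contradiction: $a^0=0$ together with $a^e\geq 0$ is perfectly consistent. What you actually have, once $a^e\geq a^0\geq 0$ and $d^e\geq d^0\geq 0$ are in place, is that each summand in $\sum a^e d^e=\sum a^0 d^0$ must satisfy $a^e d^e=a^0 d^0$, and this forces $(a^e-a^0)d^e=0$ and $a^0(d^e-d^0)=0$. Taking $(X_1,Y_1)=(X_2,Y_2)$, where $d^e=1$, one does obtain $a^e=a^0$ for every admissible $(X_2,Y_2)$; but the equation $a^0(d^e-d^0)=0$ only yields $d^e=d^0$ at those $(X_2,Y_2)$ for which $a^0\neq 0$, i.e.\ (by Theorem \ref{T:d}(ii)) $X_2=\{x\}$ with $x\in V(T_r(\mu))$ and $Y_2=\emptyset$. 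Your argument never establishes $d^e=d^0$ outside that range, and the ``compensation'' you invoke does not help because those other terms already contribute zero to both sides regardless of the value of $d^e$. The paper is cleaner about the mechanism (the term-wise inequality $a^0d^0\leq a^ed^e$), and finishes part (i) by deducing $d^e_{\lambda\up{A}\down{B},\lambda}=d^0$ from the specific choice $X_2=\{\max B\}$, $Y_2=\emptyset$, with $\mu=\lambda\down{\{\max B\}}$ — a choice your proposal does not make. Note, though, that even there one is implicitly using that $\max B\in V(T_r(\mu))$ so that the corresponding $a^0$ is nonzero, and you should satisfy yourself (or supply an argument, e.g.\ via row/column removal or a more careful choice of the node to remove) that this can always be arranged; your sketch as written sidesteps this essential point rather than resolving it.
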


\begin{proof}
The Theorem holds for $n = 0$ trivially.  Let $\mu \in \mathcal{P}_{n-1}$, and assume that part (i) holds for $\mu$.  Let $\rInd^{\mathcal{S}_n} P^{\mu} = \bigoplus_{\tau \in \mathcal{P}_{n}} a^l_{\tau\mu} P^{\tau}$, and suppose that in the Grothendieck group, $[\rInd^{\mathcal{S}_n} P^{\mu}] = \sum_{\sigma \in \mathcal{P}_n} c^l_{\sigma\mu} [\Delta^{\sigma}]$. We have, in the Grothendieck group,
\begin{align*}
[\rInd P^{\mu}] &= \sum_{\nu \in \mathcal{P}_{n-1}} d^l_{\nu\mu} [\rInd \Delta^{\nu}]
= \sum_{\t_r^{\nu} \leq \t_r^{\mu}} d^l_{\nu\mu} [\rInd \Delta^{\nu}] \\
&= \sum_{X,Y} d^l_{\mu\up{X}\down{Y},\mu} [\rInd \Delta^{\mu \up{X}\down{Y}}] + \sum_{\t_r^{\nu} < \t_r^{\mu}} d^l_{\nu\mu} [\rInd \Delta^{\nu}],
\end{align*}
where $X$ and $Y$ run over all subsets of $I_r(\mu)$ and $R_r(\mu)$ respectively satisfying $|X| = |Y|$ and $X \onto Y$, by Lemmas \ref{L:decompJorder}, \ref{L:Jorder->preorder} and \ref{L:dom}.  Thus, for $X_1 \subseteq I_r(\mu)$ and $Y_1 \subseteq R_r(\mu)$ such that $|X_1| = |Y_1| + 1$ and $X_1 \onto Y_1$, we have
$$
c^l_{\mu\up{X_1} \down{Y_1}, \mu} = \sum_{z \in R_r(\mu) \cup X_1 \setminus Y_1} d^l_{\mu\up{X_1} \down{Y_1 \cup \{z\}}, \mu}
$$
by Theorem \ref{T:Weylbranch}.
Since part (i) holds for $\mu$, we have
\begin{align*}
c^e_{\mu\up{X_1} \down{Y_1}, \mu} &= \sum_{z \in R_r(\mu) \cup X_1 \setminus Y_1} d^e_{\mu\up{X_1} \down{Y_1 \cup \{z\}}, \mu} \\
&= \sum_{z \in R_r(\mu) \cup X_1 \setminus Y_1} d^0_{\mu\up{X_1} \down{Y_1 \cup \{z\}}, \mu} = c^0_{\mu\up{X_1} \down{Y_1}, \mu},
\end{align*}
so that by Proposition \ref{P:imptS}(iii)
\begin{multline*}
\sum_{(X_2,Y_2) \preceq (X_1,Y_1)} a^e_{\mu \up{X_2}\down{Y_2}, \mu} d^e_{\mu \up{X_1} \down{Y_1}, \mu \up{X_2} \down{Y_2}} \\
= \sum_{(X_2,Y_2) \preceq (X_1,Y_1)} a^0_{\mu \up{X_2}\down{Y_2}, \mu} d^0_{\mu \up{X_1} \down{Y_1}, \mu \up{X_2} \down{Y_2}}.
\end{multline*}
Now $a^0_{\mu \up{X_2}\down{Y_2}, \mu} = a_{\mu \up{X_2}\down{Y_2}, \mu}(1)$ (here, and hereafter, $a_{\tau \mu}(v) \in \mathbb{C}[v,v^{-1}]$ satisfies $f_r(G(\mu)) = \sum_{\tau \in \mathcal{P}_{n+1}} a_{\tau \mu}(v) G(\tau)$), so that $a^0_{\mu \up{X_2} \down{Y_2}} = 0$ if $|X_2| > 1$ or $X_2 = \{x\} \not\subseteq V(T_r(\mu))$ by Theorem \ref{T:d}(ii).  Furthermore, by Theorem \ref{T:Kleshchev} and \eqref{E:a}, $a^e_{\mu \up{X_2} \down{Y_2}, \mu} = a_{\mu \up{X_2}\down{Y_2}, \mu}(1)$ if $X_2 = \{x\} \subseteq V(T_r(\mu))$.  Thus, $a^e_{\mu \up{X_2}\down{Y_2}, \mu} \geq a^0_{\mu \up{X_2}\down{Y_2}, \mu}$ always holds.  We also have $d^e_{\mu \up{X_1} \down{Y_1}, \mu \up{X_2}\down{Y_2}} \geq d^0_{\mu \up{X_1} \down{Y_1}, \mu \up{X_2}\down{Y_2}}$ by Lemma \ref{L:geq}.  Thus
\begin{align*}
\sum_{(X_2,Y_2) \preceq (X_1,Y_1)} a^0_{\mu \up{X_2}\down{Y_2}, \mu}& d^0_{\mu \up{X_1} \down{Y_1}, \mu \up{X_2}\down{Y_2}} \\
&\leq \sum_{(X_2,Y_2) \preceq (X_1,Y_1)} a^e_{\mu \up{X_2}\down{Y_2}, \mu} d^e_{\mu \up{X_1} \down{Y_1}, \mu \up{X_2}\down{Y_2}} \\
&= \sum_{(X_2,Y_2) \preceq (X_1,Y_1)} a^0_{\mu \up{X_2}\down{Y_2}, \mu} d^0_{\mu \up{X_1} \down{Y_1}, \mu \up{X_2}\down{Y_2}},
\end{align*}
forcing equality throughout; i.e.\
\begin{align*}
a^e_{\mu \up{X_2}\down{Y_2}, \mu} &= a^0_{\mu \up{X_2}\down{Y_2}, \mu} ,\\
d^e_{\mu \up{X_1} \down{Y_1}, \mu \up{X_2}\down{Y_2}} &= d^0_{\mu \up{X_1} \down{Y_1}, \mu \up{X_2}\down{Y_2}}
\end{align*}
for all $X_2$ and $Y_2$ such that $(X_2,Y_2) \preceq (X_1,Y_1)$.

Now let $\lambda \in \mathcal{P}_n$ and assume that part (i) holds for all $\mu \in \mathcal{P}_{n-1}$. Let $A \subseteq I_r(\lambda)$ and $B \subseteq R_r(\lambda)$.

If $A = B = \emptyset$, then $d^l_{\lambda \up{A}\down{B}, \lambda} = 1$ for any $l$ by Theorem \ref{T:vdecomp}(i).  If $|A| = |B|>0 $ and $A \not\leftrightarrow B$, then $d^l_{\lambda\up{A}\down{B},\lambda} = 0$ for any $l$ by Lemma \ref{L:dom}.  If $|A| = |B| > 0$ and $A \leftrightarrow B$, then let $b = \max(B)$ and $\mu = \lambda \down{\{b\}} \in \mathcal{P}_{n-1}$.  Let $X_1 = A$, $Y_1 = B \setminus \{b\}$, $X_2 = \{b\}$, $Y_2 = \emptyset$.  Then $X_1,X_2 \subseteq I_r(\mu)$, $Y_1,Y_2 \subseteq R_r(\mu)$ with $(X_2,Y_2) \preceq (X_1,Y_1)$, so that $d^e_{\mu \up{X_1} \down{Y_1}, \mu \up{X_2}\down{Y_2}} = d^0_{\mu \up{X_1} \down{Y_1}, \mu \up{X_2}\down{Y_2}}$, i.e.\ $d^e_{\lambda \up{A}\down{B}, \lambda} = d^0_{\lambda \up{A}\down{B}, \lambda}$.  This proves part (i).

If $|B| = |A| + 1$ with $B \onto A$, and either $|B| > 1$ or $B = \{b\} \not\subseteq U^+(T(\lambda))$, let $\mu = \lambda \down{B} \up {A} \in \mathcal{P}_{n-1}$.  Let $X_1 = B$ and $Y_1 = A$.  Then $X_1 \subseteq I_r(\mu)$, $Y_1 \subseteq R_r(\mu)$ and $X_1 \onto Y_1$, so that
$$
a^e_{\lambda,\lambda \down{B}\up{A}} = a^e_{\mu \up{X_1} \down{Y_1},\mu} = a^0_{\mu \up{X_1} \down{Y_1},\mu} = a_{\mu \up{X_1} \down{Y_1},\mu}(1),
$$
which equals zero by Theorem \ref{T:d}(ii).  Part (ii) now follows from \eqref{E:a}.
\end{proof}

\section{Proof of Theorem \ref{T:d}} \label{S:proof}

We provide the proof of Theorem \ref{T:d} in this section.

Let $\lambda \in \mathcal{P}_n$ and $r \in \mathbb{Z}_e$.  Write $T = (T^+,T^-)$ for $T_r(\lambda) = (R_r(\lambda),I_r(\lambda))$, and let $A \subseteq T^-$ and $B \subseteq T^+$ such that $|A| = |B|+1$ and $A \onto B$.  Consider $\langle f_r( G(\lambda)), \lambda \up{A}\down{B} \rangle$, which can be computed in the following two ways:

Firstly,
\begin{align*}
\langle f_r (G(\lambda)), \lambda \up{A}\down{B} \rangle &= \langle f_r (\sum_{\mu} d_{\mu \lambda}(v) \mu), \lambda \up{A} \down{B} \rangle \\
&= \sum_{c \in T^+ \cup A \setminus B} d_{\lambda \up{A} \down{B\cup \{c\}}, \lambda}(v) \langle f_r (\lambda \up{A} \down{B \cup \{c\}}), \lambda \up{A} \down{B} \rangle \\
&= \sum_{\substack{c \in T^+ \cup A \setminus B\\ A \onto (B \cup \{c\})}} v^{2(|B^{>c}|- |A^{>c}|)-|T_c|} d_{\lambda \up{A} \down{B \cup \{c\}}, \lambda}(v)
\end{align*}
by \eqref{E:f_r} and Lemma \ref{L:dom}.
If Theorem \ref{T:d}(i) holds for $\lambda$, then we have
\begin{equation}
\langle f_r (G(\lambda)), \lambda \up{A}\down{B} \rangle = \sum v^{2(|B^{>c}|- |A^{>c}|) -|T_c| + \| \omega\|} \label{E:left}
\end{equation}
where the sum runs over all $c \in T^+ \cup A \setminus B$ such that $A \onto (B \cup \{c\})$ and over all $\omega \in \Omega(T_A^{B \cup \{c\}})$.

On the other hand, we also have, if $f_r (G(\lambda)) = \sum_{\tau \in \mathcal{P}_{n+1}} a_{\tau\lambda}(v) G(\tau)$,
\begin{equation*}
  \langle f_r (G(\lambda)), \lambda \up{A} \down{B} \rangle = \sum_{(C,D) \preceq (A,B)} a_{\lambda \up{C} \down{D},\lambda}(v) d_{\lambda \up{A} \down{B}, \lambda \up{C}\down{D}}(v)
\end{equation*}
by Proposition \ref{P:impt}(iii).
If Theorem \ref{T:d} holds, then we have
\begin{align}
\langle f_r (G(\lambda)), \lambda \up{A} \down{B} \rangle &=\sum_{d \in V(T)} [1+|(U^+(T))^{>d}|]_v\, d_{\lambda \up{A} \down{B},\lambda\up{\{d\}}}(q) \notag \\
= &\sum_{d \in V(T)} \left(\sum_{d' \in \{d\} \cup (U^+(T))^{>d}} v^{2|T_d^{d'+1}| - |T_d|} \right) d_{\lambda \up{A} \down{B},\lambda\up{\{d\}}}(v) \notag \\
= &\sum v^{2|T_d^{d'+1}| - |T_d| + \| \varpi \|} \label{E:right}
\end{align}
where the last sum runs over all $d \in V(T)$ such that $A \onto (B \cup \{d\})$, $d' \in \{d\} \cup (U^+(T))^{>d}$, and $\varpi \in \Omega((T \up{d})_A^{B\cup \{d\}})$ (where $T\up{d} = T_r(\lambda \up{\{d\}}) = (T^+ \cup \{d \}, T^- \setminus \{ d \})$).

Hence, when Theorem \ref{T:d} holds, there must be a bijection $\phi$ between the indexing sets of the monic monomials in \eqref{E:left} and \eqref{E:right} preserving their respective exponents.

More formally, let
\begin{align*}
\mathcal{L}_A^B(T) &= \{ (c, \omega) \mid c \in  T^+ \cup A \setminus B,\ A \onto (B \cup \{c\}),\ \omega \in \Omega(T_A^{B \cup \{c\}}) \}; \\
\mathcal{R}_A^B(T) &= \{(d,d', \varpi) \mid d \in V(T),\ A \onto (B \cup \{d\}),\\
&\hspace*{2.53cm} d' \in \{d\} \cup (U^+(T))^{>d},\ \varpi \in \Omega((T \up{d})_A^{B\cup \{d\}}) \}.
\end{align*}
For $(c, \omega) \in \mathcal{L}_A^B(T)$ and $(d,d', \varpi) \in \mathcal{R}_A^B(T)$, we define their norms by
\begin{align*}
\| (c, \omega) \| &= 2(|B^{>c}| - |A^{>c}|)-|T_c| + \| \omega \|;  \\
\| (d,d', \varpi) \| &= 2|T_d^{d'+1}| - |T_d| + \| \varpi \|.
\end{align*}
Then we have:

\begin{thm} \label{T:bij}
Let $T = (T^+,T^-)$ be a sign sequence. Let $A \subseteq T^-$ and $B \subseteq T^+$ such that $|A| = |B|+1$ and $A \onto B$.  Then there is a norm-preserving bijection between $\mathcal{L}_A^B(T)$ and $\mathcal{R}_A^B(T)$.
\end{thm}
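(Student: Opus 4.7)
The plan is to prove Theorem~\ref{T:bij} by constructing an explicit norm-preserving bijection $\phi \colon \mathcal{L}_A^B(T) \to \mathcal{R}_A^B(T)$. Given $(c,\omega) \in \mathcal{L}_A^B(T)$, I would first identify the element $a_0 \in A$ paired with $c$ under $\pi = \pi_{A \to B \cup \{c\}}$ (taking $a_0 = c$ when $c \in A$). The component $\gamma_{a_0} \in \mathbb{L}(T_{a_0}^c)$ of $\omega = (\gamma_a)_{a \in A}$ carries the distinguishing information; the remaining $\gamma_a$ ($a \in A \setminus \{a_0\}$) transfer to the image $\varpi$ largely intact, subject only to the single sign flip at the position $d$ when passing from $T$ to $T\up{d}$.

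I would define $\phi(c,\omega) = (d,d',\varpi)$ by choosing $d \in V(T)$ as the valley of $T$ closest to $c$ such that $A \onto B \cup \{d\}$: on the right of $c$ in the generic case (and then $d' = d$), but on the left when $c$ coincides with an unpaired ascent of $\Gamma(T)$ (and then $d' = c \in (U^+(T))^{>d}$). The modified path $\varpi$ is obtained from $\omega$ by transforming $\gamma_{a_0}$ into $\gamma'_{a_0} \in \mathbb{L}((T\up{d})_{a_0}^d)$: when $c \leq d$ the new path extends $\gamma_{a_0}$ by absorbing the elements of $T$ strictly between $c$ and $d$ together with $c$ itself (which now contributes an interior $\diagup$-step), while when $c > d$ the new path is a restriction of an extension past $c$. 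The other components of $\omega$ are translated from $T_a^{\pi(a)}$ to $(T\up{d})_a^{\pi'(a)}$ using only the sign flip at $d$.

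The verification splits into three parts: (i) confirming $(d,d',\varpi) \in \mathcal{R}_A^B(T)$, in particular the well-nested condition for $\varpi$, which follows from the nesting of $\omega$ once one invokes the Dyck shape of $\Gamma(T)$ past $d$ to control the new segment of $\gamma'_{a_0}$; (ii) verifying $\|(c,\omega)\| = \|(d,d',\varpi)\|$ via direct bookkeeping, where the changes in path norm balance the adjustments $-|T_c|$, $-|T_d|$, $2|T_d^{d'+1}|$ and $2(|B^{>c}| - |A^{>c}|)$, the key arithmetic input being the identity $|T_d| = |(U^+(T))^{>d}|$ for $d \in V(T)$; (iii) constructing the inverse by recovering $c$ as $d'$ when $d' \neq d$, and otherwise from a distinguished flat-to-ridge transition in $\gamma'_{a_0}$ marking the absorbed $T^+$-element. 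The main obstacle is the case analysis when $c$ lies inside an enclosing pair $(a',\pi(a'))$ of $\omega$, so that the modification of $\gamma_{a_0}$ crosses a region constrained by nesting; the valley property of $d$ is the decisive input guaranteeing compatibility of the nesting relations between $\omega$ and $\varpi$.
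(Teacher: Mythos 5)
Your construction reproduces, in spirit, the bijection the paper uses for the base case $B = \emptyset$ (where $A = \{a\}$ and $\omega$ consists of a single path $\gamma_a$), but it does not generalise to $|B|>0$ because of cascading changes in the pairing. When you replace $c$ by $d$, the pairing $\pi_{A\to B\cup\{d\}}$ is not obtained from $\pi_{A\to B\cup\{c\}}$ by merely reassigning the partner of the single element $a_0$: the partners of several (possibly all) elements of $A$ can shift. Concretely, take $T^-=\{1,3,7\}$, $T^+=\{2,5\}$, $A=\{1,3\}$, $B=\{5\}$, $c=2$. Then $\pi_{A\to\{2,5\}}$ sends $1\mapsto 2$, $3\mapsto 5$, while your rule gives $d=3$ (nearest valley to the right of $c$ with $A\onto B\cup\{d\}$), and $\pi_{A\to\{3,5\}}$ sends $3\mapsto 3$, $1\mapsto 5$. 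The intervals supporting \emph{both} components $\gamma_1$ and $\gamma_3$ change: $T_1^2$ becomes $(T\!\up{3})_1^5$ and $T_3^5$ becomes $(T\!\up{3})_3^3$. So the claim that the components $\gamma_a$ with $a\ne a_0$ ``transfer to the image $\varpi$ largely intact, subject only to the single sign flip at $d$'' is false, and your construction is left without a recipe for the other components. The difficulty you flag (``$c$ inside an enclosing pair'') is a symptom, but the actual obstruction is more global: even when $T_{a_0}^{c}$ is not nested inside anything, switching from $c$ to $d$ can rematch a chain of elements, and the resulting family of paths must be rebuilt in a coordinated way. There is no evidence in your sketch that a single local modification of $\gamma_{a_0}$ can absorb this.

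The paper sidesteps this problem entirely by \emph{not} attempting a direct bijection for general $B$. Its proof of Theorem \ref{T:bij} is a double induction: the base case $B=\emptyset$ is the direct construction (essentially what you wrote), and for $|B|>0$ it sets up two reductions. In the case where some pair $a_0\in A$, $b_0\in B$ with $a_0<b_0$ has $(T^+)_{a_0}^{b_0}=\emptyset$, it peels off that tight pair and reduces to $(A',B')$ with $|B'|<|B|$. Otherwise, it selects $(a_0,b_0)$ minimising $|(T^+)_{a_0}^{b_0}|$, sets $b_1=\max((T^+)_{a_0}^{b_0})$ and $a_2=\min((T^\pm)_{b_1}^{b_0})$, and constructs injections $\phi_1:\mathcal{L}_A^{\tilde B}(T)\to\mathcal{L}_A^B(T)$ and $\phi_2:\mathcal{L}_A^B(T')\to\mathcal{L}_A^B(T)$ (with $\tilde B=B\cup\{b_1\}\setminus\{b_0\}$ and $T'$ obtained by deleting $b_1,a_2$) whose images partition $\mathcal{L}_A^B(T)$, and likewise $\psi_1,\psi_2$ for $\mathcal{R}_A^B(T)$; both reductions strictly decrease $\min|(T^+)_a^b|$, so the inductive hypothesis applies. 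This two-pronged decomposition, not a closed-form global bijection, is the key idea of the proof, and it is entirely absent from your proposal. Unless you supply a precise rule for rebuilding all components $\gamma_a$ under a rematching of the entire pairing (and verify well-nestedness and norm preservation for the rebuilt family), the gap stands.
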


\begin{thm} \label{equiv}
Theorems \ref{T:d} and \ref{T:bij} are equivalent.
\end{thm}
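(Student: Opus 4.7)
The plan is to prove the equivalence in two directions, one of which is essentially packaged by the computations already performed in the section, while the other requires a substantive induction.

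For the forward direction (Theorem~\ref{T:d} $\Rightarrow$ Theorem~\ref{T:bij}), I would invoke the two evaluations of $\langle f_r(G(\lambda)), \lambda\up{A}\down{B}\rangle$ leading to \eqref{E:left} and \eqref{E:right}: assuming Theorem~\ref{T:d}, these yield the polynomial identity $\sum_{(c,\omega) \in \mathcal{L}_A^B(T)} v^{\|(c,\omega)\|} = \sum_{(d,d',\varpi) \in \mathcal{R}_A^B(T)} v^{\|(d,d',\varpi)\|}$ in $\mathbb{Z}_{\geq 0}[v,v^{-1}]$, and matching monic monomials produces a norm-preserving bijection. To upgrade from $T = T_r(\lambda)$ to an arbitrary sign sequence $T$, I would verify that every finite sign sequence $(T^+, T^-)$ on any totally ordered set arises as $T_r(\lambda)$ for some choice of $e$ and $\lambda$: take $e$ large enough that residues do not collide in the window of interest, and construct $\lambda$ row by row so that its $r$-rim matches the prescribed pattern of indent and removable nodes.

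For the reverse direction, I plan to argue by double induction: primary on $n = |\lambda|$, and secondary on pairs $(A, B)$ ordered by the partial order $\preceq$ of Lemma~\ref{L:order}. The base $n = 0$ is trivial. Assuming Theorem~\ref{T:d} holds for all partitions of smaller size, fix $\lambda \in \mathcal{P}_n$ and proceed by $\preceq$-induction on $(A, B)$, proving parts (i) and (ii) in tandem. At each stage, I equate the two evaluations of $\langle f_r(G(\lambda)), \lambda\up{A}\down{B}\rangle$ coming from Proposition~\ref{P:impt}(iii) and from the direct $f_r$-action on $\sum_\mu d_{\mu\lambda}(v)\mu$: by the primary induction on smaller partitions (reducing $d_{\lambda\up{A}\down{B}, \lambda\up{C}\down{D}}(v)$ via Theorem~\ref{T:vdecomp}(v) whenever a common first row can be peeled off) and by the secondary induction on strictly smaller $(C,D) \prec (A,B)$ (for the $a_{\lambda\up{C}\down{D},\lambda}(v)$ factors), the identity collapses to a single equation in the unknown attached to $(A,B)$. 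Theorem~\ref{T:bij} then forces this unknown to equal the predicted combinatorial expression, giving Theorem~\ref{T:d}(ii) when $|A| = |B|+1$ and Theorem~\ref{T:d}(i) when $|A| = |B|$.

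The main obstacle I anticipate is the bookkeeping required to ensure that exactly one unknown survives at each inductive step and that the two parts close on each other consistently. In particular, Theorem~\ref{T:d}(ii) predicts $a_{\lambda\up{A}\down{B},\lambda}(v) = 0$ except when $A = \{a\} \subseteq V(T_r(\lambda))$, so on the $\mathcal{R}_A^B(T)$ side the effective sum collapses to $V(T)$, and this collapse has to be matched against the bijection supplied by Theorem~\ref{T:bij}; Proposition~\ref{P:impt}(i,ii) will be essential here to discard a priori all $\tau$ with $\t_r^\tau \not\leq \t_r^\lambda$. A secondary difficulty is the asymmetry between $|A| = |B|$ and $|A| = |B|+1$: the inner-product equation most cleanly isolates $a$ in the latter case, so Theorem~\ref{T:d}(i) must be extracted by running the equation at a slightly larger pair $(A, B \cup \{c\})$ and solving for the newly appearing $d$-term, which is where the precise compatibility with Theorem~\ref{T:bij}'s norm statistic will be decisive.
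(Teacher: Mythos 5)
Your overall plan matches the paper's: the forward direction ($\mathrm{T:d} \Rightarrow \mathrm{T:bij}$) follows from the two computations of $\langle f_r(G(\lambda)),\lambda\up{A}\down{B}\rangle$ giving \eqref{E:left} and \eqref{E:right} (and your explicit remark that every sign sequence, up to order-isomorphism, is realised as some $T_r(\lambda)$ is a sensible addition, since the paper merely says ``it suffices to show that Theorem \ref{T:bij} implies Theorem \ref{T:d}''); the reverse direction is a double induction, outer on $n = |\lambda|$, inner on $(A,B)$ via $\prec$, using Proposition \ref{P:impt}(iii) on one side, the expansion $f_r(\sum_\mu d_{\mu\lambda}(v)\mu)$ on the other, and row removal (Theorem \ref{T:vdecomp}(v)) to reduce the known terms to smaller partitions.

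However, there is a genuine gap in the reverse direction. You anticipate that ``exactly one unknown survives at each inductive step,'' and this is precisely what fails. When $\lambda$ has an indent $r$-node $x$ on its first row, the term $d_{\lambda\up{A}\down{B},\lambda\up{\{x\}}}(v)$ on the right-hand side of \eqref{E:right} \emph{cannot} be reduced by row removal, because $\lambda\up{\{x\}}$ and $\lambda\up{A}\down{B}$ do not share the same first row (all other $d_{\lambda\up{A}\down{B},\lambda\up{\{d\}}}(v)$ with $d\ne x$ \emph{are} reduced via Theorem \ref{T:vdecomp}(v)). So after applying the induction hypotheses, one is left with a single equation in \emph{two} unknowns, $a_{\lambda\up{A}\down{B},\lambda}(v)$ and $d_{\lambda\up{A}\down{B},\lambda\up{\{x\}}}(v)$, and induction alone does not separate them. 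The paper's resolution is a parity/symmetry argument that your proposal does not supply: in the exceptional case $(A,B) = (\{x\},\emptyset)$ the $d$-term is $d_{\lambda\up{\{x\}},\lambda\up{\{x\}}}(v)=1$, isolating $a$ directly; in all other cases one observes that $a_{\lambda\up{A}\down{B},\lambda}(v)$ and its conjectural value are Laurent polynomials symmetric under $v \leftrightarrow v^{-1}$ (bar-invariance of $G(\tau)$ and of $f_r$), whereas $d_{\lambda\up{A}\down{B},\lambda\up{\{x\}}}(v)$ and its conjectural value lie in $v\mathbb{N}_0[v]$. A quantity that is simultaneously symmetric about $v^0$ and a polynomial with zero constant term must vanish, so the single equation splits into the two desired identities. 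Without this mechanism — or an equivalent substitute — the tandem induction on parts (i) and (ii) does not close, and the step from Theorem \ref{T:bij} back to Theorem \ref{T:d} remains incomplete.
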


\begin{proof}
It suffices to show that Theorem \ref{T:bij} implies Theorem \ref{T:d}.  We prove by induction.  We assume Theorem \ref{T:d}(i) holds for all $\lambda \in \mathcal{P}_m$ with $m \leq n$.  Let $\lambda \in \mathcal{P}_n$, and let $A \subseteq I_r(\lambda)$, $B\subseteq R_r(\lambda)$ with $|A| = |B|+1 >0$ and $A \onto B$.
We assume further that Theorem \ref{T:d}(ii) holds for $\lambda$ and all $C \subseteq I_r(\lambda)$, $D\subseteq R_r(\lambda)$ such that $(C,D) \prec (A,B)$.

By Theorem \ref{T:bij}, the right-hand sides of \eqref{E:left} and \eqref{E:right} are equal.  Both are equal to $\langle f_r (G(\lambda)), \lambda \up{A} \down{B} \rangle$ assuming the conjectural formulae in Theorem \ref{T:d} hold.  By our induction hypothesis, these formulae do hold for all terms involved except for $a_{\lambda \up{A} \down{B},\lambda}(v)$ and $d_{\lambda \up{A} \down{B}, \lambda \up{\{x\}}}(v)$, where $x$ indexes the indent $r$-node on the first row of $\lambda$ (if it exists; the formula holds for other $d_{\lambda \up{A} \down{B}, \lambda \up{\{d\}}}(v)$ by Theorem \ref{T:vdecomp}(v)).  We are thus reduced to the following equations:
\begin{alignat*}{2}
a_{\lambda \up{\{x\}},\lambda}(v) d_{\lambda \up{\{x\}}, \lambda\up{\{x\}}}(v) &= \mathfrak{a}_{\lambda \up{\{x\}} ,\lambda}(v) \mathfrak{d}_{\lambda \up{\{x\}}, \lambda\up{\{x\}}}(v) &\quad &((A,B) = (\{x\}, \emptyset)) \\
a_{\lambda \up{A} \down{B},\lambda}(v) + d_{\lambda \up{A} \down{B}, \lambda\up{\{x\}}}(v) &= \mathfrak{a}_{\lambda \up{A} \down{B},\lambda}(v) + \mathfrak{d}_{\lambda \up{A} \down{B}, \lambda\up{\{x\}}}(v) && ((A,B) \ne (\{x\}, \emptyset)),
\end{alignat*}
where $\mathfrak{a}_{\lambda \up{A} \down{B},\lambda}(v)$ and $\mathfrak{d}_{\lambda \up{A} \down{B}, \lambda\up{\{x\}}}(v)$ denote the conjectural formulae for $a_{\lambda \up{A} \down{B},\lambda}(v)$ and $d_{\lambda \up{A} \down{B}, \lambda\up{\{x\}}}(v)$ asserted by Theorem \ref{T:d} respectively.  By Theorem \ref{T:vdecomp}(i), we have $d_{\lambda \up{\{x\}}, \lambda\up{\{x\}}}(v) = 1 = \mathfrak{d}_{\lambda \up{\{x\}}, \lambda\up{\{x\}}}(v)$, and hence $a_{\lambda \up{\{x\}},\lambda}(v) = \mathfrak{a}_{\lambda \up{\{x\}},\lambda}(v)$ from the first equation.  On the other hand, since $a_{\lambda \up{A} \down{B},\lambda}(v)$ and $\mathfrak{a}_{\lambda \up{A} \down{B},\lambda}(v)$ are Laurent polynomials in $v$ symmetric about $v^0$, while $d_{\lambda \up{A} \down{B}, \lambda\up{\{x\}}}(v)$ and $\mathfrak{d}_{\lambda \up{A} \down{B}, \lambda\up{\{x\}}}(v)$ are polynomials in $v$ with no $v^0$ term, the second equation yields
$$
a_{\lambda \up{A} \down{B},\lambda}(v)  = \mathfrak{a}_{\lambda \up{A} \down{B},\lambda}(v) \qquad \text{and} \qquad d_{\lambda \up{A} \down{B}, \lambda\up{\{x\}}}(v) = \mathfrak{d}_{\lambda \up{A} \down{B}, \lambda\up{\{x\}}}(v).
$$
Thus Theorem \ref{T:d}(ii) holds for $(A,B)$, and hence for all partitions of $n$ by induction.

Let $\mu \in \mathcal{P}_{n+1}$, and let $X \subseteq I_r(\mu)$, $Y \subseteq R_r(\mu)$.  If $\mu$ does not have an indent $r$-node on its first row, or if $\mu$ has an indent $r$-node on its first row, indexed by $x$ say, with $x \notin Y$, then $d_{\mu \up{X} \down{Y}, \mu}(v)$ equals the conjectural formula of Theorem \ref{T:d}(i) by Theorem \ref{T:vdecomp}(v).  If $\mu$ has an indent $r$-node on its first row, indexed by $x$, and $x \in Y$,  then let $\lambda = \mu \down{\{x\}}$.  We have seen from above that $d_{\mu \up{X} \down{Y}, \mu}(v) = d_{\lambda \up{X}\down{Y \setminus \{x\}},\lambda \up{\{x\}}}(v)$ equals the conjectural formula of Theorem \ref{T:d}(i).  Thus Theorem \ref{T:d}(i) holds for $\mu$, and the proof is complete.
\end{proof}

We end the paper by providing a proof of Theorem \ref{T:bij}, thereby completing the proof for Theorem \ref{T:d}.

\begin{proof}[Proof of Theorem \ref{T:bij}]
We prove by induction, and consider the following three cases separately.
\begin{itemize}
\item $B = \emptyset$:

This is the base case for the induction, and is essentially proved in \cite{CMT}.  More specifically, we define $\phi : \mathcal{L}_{\{a\}}^{\emptyset}(T) \to \mathcal{R}_{\{a\}}^{\emptyset}(T)$ as follows.  Let $(c, \gamma) \in \mathcal{L}_{\{a\}}^{\emptyset}(T)$.

For $c\in P^+(T)$, let $d = \min(V(T)^{>c})$, and let $\widehat{\gamma}$ be the latticed path for $(T\up{d})_a^d$ obtained by extending the latticed path $\gamma$ for $T_a^c$ by the $\diagup$ at $c$ and then followed by the descending latticed path between $c$ and $d$.  Then $\phi(c, \gamma) = (d,d,\widehat{\gamma})$.
\medskip

\begin{center}
  \begin{tikzpicture}[scale= 0.5]
    \draw [semithick,dotted] (-4,0) -- (-3,-1) -- (-1,1) -- (0,0) -- (1,1) -- (2,0);
    \draw [semithick,dotted] (3,-1) -- (4,0) -- (5,-1) -- (6,0) -- (7,-1) -- (8,-2) -- (9,-3) -- (12,0);
    \draw [semithick] (0,0) -- (2,0) -- (3,-1);
    \draw (-0.8, 0.3) node {$a$};
    \draw (3.7, -0.7) node {$c$};
    \draw (8.15, -2.7) node {$d$};
    \draw (12.4, 0.3) node {$\Gamma(T)$};
    \draw (1.7, -0.9) node {$\gamma$};
  \end{tikzpicture} \\
  $\downarrow$ \\
  \begin{tikzpicture}[scale= 0.5]
    \draw [semithick,dotted] (-4,0) -- (-3,-1) -- (-1,1) -- (0,0) -- (1,1) -- (2,0);
    \draw [semithick,dotted] (5,-1) -- (6,0) -- (7,-1);
    \draw [semithick,dotted] (8,-2) -- (12,2);
    \draw [semithick] (0,0) -- (2,0) -- (3,-1) -- (4,0) -- (5,-1) -- (7,-1) -- (8,-2);
    \draw (-0.8, 0.3) node {$a$};
    \draw (3.7, -0.7) node {$c$};
    \draw (8.7, -1.7)node {$d$};
    \draw (12.8, 2.4) node {$\Gamma(T\up{d})$};
    \draw (1.7, -0.9) node {$\widehat{\gamma}$};
  \end{tikzpicture}
\end{center}

For $c\in U^+(T)$, let $d = \max((T^-)^{<c})$ ($=\max(V(T)^{<c})$), and let $\bar{\gamma}$ be the latticed path for $(T\up{d})_a^d$ obtained by truncating the latticed path $\gamma$ for $T_a^c$ at $d$.  Then $\phi(c, \gamma) = (d,c,\bar{\gamma})$.

For $c = a$ (and hence $\gamma = \emptyset$) with $a \in V(T)$, $\phi(a,\emptyset) = (a,a,\emptyset)$.  For $c=a$ with $a \notin V(T)$, let $d = \min(V(T)^{>c})$, and let $\delta$ be the descending latticed path for $(T\up{d})_a^d$.  Then $\phi(a,\emptyset) = (d,d,\delta)$.

The reader may check that $\phi$ so defined is a well-defined norm-preserving bijection.\medskip

\item $|B| > 0$, and $(T^+)_{a_0}^{b_0} = \emptyset$ for some $a_0 \in A$, $b_0 \in B$ with $a_0< b_0$:

By replacing $a_0$ with $\max((T^-)_{a_0}^{b_0} \cap A)$ if necessary, we may assume that $(T^-)_{a_0}^{b_0} \cap A = \emptyset$.  Then $\pi_{A \to B \cup\{c\}} (a_0) = (b_0)$ for all $c \ne S^+(T) \cup A \setminus (B \cup \{a_0\})$, and $\pi_{A \to B \cup\{d\}} (a_0) = (b_0)$ for all $d \in V(T) \setminus \{a_0\}$, and the generic latticed path is the only element in $\mathbb{L}(T_{a_0}^{b_0})$.
Let $A' = A \setminus \{a_0\}$, $B' = B \setminus \{b_0\}$.  We define $\phi : \mathcal{L}_A^B(T) \to \mathcal{L}_{A'}^{B'}(T)$ and $\psi : \mathcal{R}_A^B(T) \to \mathcal{R}_{A'}^{B'}(T)$ as follows.

For $(c,\omega) \in \mathcal{L}_A^B(T)$, with $\omega = (\gamma_a)_{a \in A}$, let $\omega' = (\gamma_a)_{a \in A'}$.  Define
$$
\phi(c,\omega) :=
\begin{cases}
(c, \omega'), &\text{if $c \ne a_0$;} \\
(b_0, \omega'), &\text{if $c = a_0$}.
\end{cases}
$$

For $(d,d', \varpi) \in \mathcal{R}_A^B(T)$, with $\varpi = (\delta_a)_{a\in A}$, define $\psi(d,d', \varpi) :=
(d,d', \varpi')$, where $\varpi' = (\delta_a)_{a\in A'}$.

The reader may check that both $\phi$ and $\psi$ are well-defined bijections that reduce the norm of each element in their respective domains by $1-|T_{a_0}^{b_0}|$. By induction hypothesis (on $|B|$), there is a norm-preserving bijection $\chi: \mathcal{L}_{A'}^{B'}(T) \to \mathcal{R}_{A'}^{B'}(T)$.  Thus $\psi^{-1} \circ \chi \circ \phi : \mathcal{L}_A^B(T) \to \mathcal{R}_A^B(T)$ is a norm-preserving bijection. \medskip

\item $|B| > 0$, and $(T^+)_a^b \ne \emptyset$ for all $a \in A$, $b \in B$ with $a< b$:

This is the most difficult case.  Let $a_0 \in A$, $b_0 \in B$ with $a_0< b_0$, such that $|(T^+)_{a_0}^{b_0}| \leq |(T^+)_a^b|$ for all $a \in A$, $b \in B$ with $a < b$.  Then $(T^{\pm})_{a_0}^{b_0} \cap B = \emptyset$ and, by replacing $a_0$ with $\min((T^{\pm})_{a_0}^{b_0} \cap A)$ if necessary, we may assume that $(T^{\pm})_{a_0}^{b_0} \cap A = \emptyset$. Let $b_1 = \max((T^+)_{a_0}^{b_0})$, and let $\tilde{B} = B \cup \{b_1\} \setminus \{b_0\}$.  If $T_{b_1}^{b_0} \ne \emptyset$, then $(T^{\pm})_{b_1}^{b_0} = (T^-)_{b_1}^{b_0}$; let $a_2 = \min((T^{\pm})_{b_1}^{b_0}))$, and $T' = (T^+ \setminus \{b_1\}, T^- \setminus \{ a_2\})$.

Let $\phi_1: \mathcal{L}_A^{\tilde{B}}(T) \to \mathcal{L}_A^B(T) $ be defined as follows.  Let $(c,\tilde{\omega}) \in \mathcal{L}_A^{\tilde{B}}(T)$.  For $c \ne b_0$, let $a_1 = \pi^{-1}_{A \to (\tilde{B} \cup \{c\})}(b_1)$. Then $\pi_{A \to (B \cup \{c\})}(a_1) = b_0$.  Let $\omega \in \Omega(T_A^{B \cup \{c\}})$ be obtained from $\tilde{\omega} = (\tilde{\gamma}_a)_{a\in A}$ by replacing $\tilde{\gamma}_{a_1}$ with the latticed path for $T_{a_1}^{b_0}$ obtained by extending $\tilde{\gamma}_{a_1}$ by the generic latticed path between $b_1$ (inclusive) and $b_0$.  Define
$$
\phi_1(c,\tilde{\omega}) =
\begin{cases}
(c,\omega), &\text{if }c \ne b_0; \\
(b_1,\tilde{\omega}), &\text{if }c = b_0.
\end{cases}
$$

Let $\phi_2: \mathcal{L}_A^B(T') \to \mathcal{L}_A^B(T)$ be defined as follows.  Let $(c,\omega') \in \mathcal{L}_A^B(T')$, with $\omega' = (\gamma'_a)_{a \in A}$.  Write $\pi$ for $\pi_{A \to (B \cup \{c\})}$.  For $a \in A$, define $\gamma_a \in \mathbb{L}(T_a^{\pi(a)})$ to be the latticed path obtained by inserting horizontal line segments into $\gamma'_a$ at positions $b_1$ and $a_2$ if $a < b_1 < \pi(a)$, and as $\gamma'_a$ otherwise.  Let $\omega = (\gamma_a)_{a \in A}$.  Then $\omega \in \Omega(T_A^{B \cup \{c\}})$.  Define $\phi_2(c,\omega') = (c, \omega)$.

The reader may check that $\phi_1$ and $\phi_2$ are injective, whose images partition $\mathcal{L}_A^B(T)$.  Furthermore, $\phi_1$ increases the norm of each element by $1- |T_{b_1}^{b_0}|$, while $\phi_2$ is norm-preserving.

Let $\psi_1: \mathcal{R}_A^{\tilde{B}}(T) \to \mathcal{R}_A^B(T) $ be defined as follows.  Let $(d,d', \tilde{\varpi}) \in \mathcal{R}_A^{\tilde{B}}(T)$, with $\tilde{\varpi} = (\tilde{\delta}_a)_{a\in A}$.
For $d \ne \max((T^{\pm})^{b_0})$, let $a_1 = \pi^{-1}_{A \to (\tilde{B} \cup \{d\})}(b_1)$.  Then $\pi_{A \to (B \cup \{d\})} (a_1) = b_0$.  Let $\varpi_1 \in \Omega((T\up{d})_A^{B \cup \{d\}})$ be obtained from $\tilde{\varpi}$ by replacing $\tilde{\delta}_{a_1}$ with the latticed path for $(T\up{d})_{a_1}^{b_0}$ obtained by extending $\tilde{\delta}_{a_1}$ by the generic latticed path between $b_1$ (inclusive) and $b_0$.
For $d = \max((T^{\pm})^{b_0})$, let $a' = \pi^{-1}_{A \to (\tilde{B} \cup \{d\})}(d)$.  Then
\begin{alignat*}{2}
\pi_{A \to (\tilde{B} \cup \{d\})} (a_0) &= b_1, \qquad & \pi_{A \to (\tilde{B} \cup \{d\})} (a') &= d, \\
\pi_{A \to (B \cup \{d\})} (a_0) &= d, \qquad & \pi_{A \to (B \cup \{d\})} (a') &= b_0.
\end{alignat*}
Let $\varpi_2 \in \Omega((T\up{d})_A^{B \cup \{d\}})$ be obtained from $\tilde{\varpi}$ by replacing $\tilde{\delta}_{a_0}$ with the latticed path for $(T\up{d})_{a_0}^{d}$ obtained by extending $\tilde{\delta}_{a_0}$ by the generic latticed path between $b_1$ (inclusive) and $d$, and replacing $\tilde{\delta}_{a'}$ with the latticed path for $(T\up{d})_{a'}^{b_0}$ obtained by extending $\tilde{\delta}_{a'}$ by the generic latticed path between $d$ (inclusive) and $b_0$ (which is just $\diagup$ at $d$).  Define
$$\psi_1(d,d',\tilde{\varpi}) =
\begin{cases}
(d,d',\varpi_1) &\text{if }d \ne \max((T^{\pm})^{b_0}); \\
(d,d',\varpi_2) &\text{if }d = \max((T^{\pm})^{b_0}).
\end{cases}
$$

Let $\psi_2: \mathcal{R}_A^B(T') \to \mathcal{R}_A^B(T) $ be defined as follows.  Let $(d,d', \varpi') \in \mathcal{R}_A^B(T')$, with $\varpi' = (\delta'_a)_{a \in A}$.  Write $\pi$ for $\pi_{A \to (B \cup \{d\})}$.  For $a \in A$, define $\delta_a \in \mathbb{L}((T\up{d})_a^{\pi(a)})$ to be the latticed path obtained by inserting horizontal line segments into $\delta'_a$ at positions $b_1$ and $a_2$ if $a < b_1 < \pi(a)$, and as $\delta'_a$ otherwise.  Let $\varpi = (\delta_a)_{a \in A}$.  Then $\varpi \in \Omega((T\up{d})_A^{B \cup \{c\}})$.  Define $\psi_2(d,d', \varpi') = (d,d', \varpi)$.

The reader may check that $\psi_1$ and $\psi_2$ are injective, whose images partition $\mathcal{R}_A^B(T)$.  Furthermore, $\psi_1$ increases the norm of each element by $1- |T_{b_1}^{b_0}|$, while $\psi_2$ is norm-preserving.

By induction hypothesis (on $\min \{ |(T^+)_a^b| : a \in A,\ b \in B,\ a<b\}$), there are norm-preserving bijections $\varphi: \mathcal{L}_A^{\tilde{B}}(T) \to \mathcal{R}_A^{\tilde{B}}(T)$ and $\chi : \mathcal{L}_A^B(T') \to \mathcal{R}_A^B(T')$.  Thus, $(\psi_1 \coprod \psi_2) \circ (\varphi \coprod \chi) \circ (\phi_1 \coprod \phi_2)^{-1}$ is a norm-preserving bijection from $\mathcal{L}_A^B(T)$ to $\mathcal{R}_A^B(T)$.
\end{itemize}
\end{proof}

\end{document}